\newtheorem{theorem}{Theorem}[section]
\newtheorem{lemma}[theorem]{Lemma} 
\newtheorem{proposition}[theorem]{Proposition} 
\newtheorem{definition}[theorem]{Definition} 
\newtheorem{problem}[theorem]{Problem} 
\newtheorem{corollary}[theorem]{Corollary}
\numberwithin{equation}{section}
\def\Q{{\mathbb {Q}}}
\def\Z{{\mathbb Z}}  
\def\C{{\mathbb C}}
\def\x{{\bf  x}}
\def\bc{{\bf c}}
\def\house#1{\setbox1=\hbox{$\,#1\,$}%
\dimen1=\ht1 \advance\dimen1 by 2pt \dimen2=\dp1 \advance\dimen2 by 2pt
\setbox1=\hbox{\vrule height\dimen1 depth\dimen2\box1\vrule}%
\setbox1=\vbox{\hrule\box1}%
\advance\dimen1 by .4pt \ht1=\dimen1
\advance\dimen2 by .4pt \dp1=\dimen2 \box1\relax}
\def\build#1_#2^#3{\mathrel{\mathop{\kern 0pt#1}\limitr_{#2}^{#3}}}
\def\date {le\ {\the\day}\ \ifcase\month\or 
janvier\or fevrier\or mars\or avril\or mai\or juin\or juillet\or
ao\^ut\or septembre\or octobre\or novembre\or 
d\'ecembre\fi\ {\oldstyle\the\year}}
\font\fivegoth=eufm5 \font\sevengoth=eufm7 \font\tengoth=eufm10
\def\smallsquare{\vbox{\hrule\hbox{\vrule height 1 ex\kern 1 ex\vrule}\hrule}}
\def\bx{{\bf x}}
\def\s{{\bf s}}
\def\bfs{{\bf s}}
\def\bfv{{\bf v}}
\def\tit{{\tilde t}}
\def\tir{{\tilde r}}
\def\tis{{\tilde s}}
\def\rmi{{\rm i}}
\def\rme{{\rm e}} 
\def\us{{\underline s}} 
\def\usigma{{\underline \sigma}} 
\def\oplus{{\dot +}}
\def\ominus{{\dot -}}
\def \eqalign#1{\null\,\vcenter{\openup\jot\mathsurround 0pt\ialign{\strut
\hfil$\displaystyle{##}$&$\displaystyle{{}##}$\hfil
&&\quad\strut\hfil$\displaystyle{##}$&$\displaystyle{{}##}$\hfil
\crcr#1\crcr}}\,}
\def \ialign {\everycr {} \tabskip 0pt \halign }
\begin{document}

\title[Transcendence of values of Hecke--Mahler series]{Transcendence and continued fraction expansion of values of Hecke--Mahler series}

\author{Yann Bugeaud}
\address{Universit\'e de Strasbourg, Math\'ematiques,
7, rue Ren\'e Descartes, 67084 Strasbourg  (France)}
\address{Institut universitaire de France}
\email{bugeaud@math.unistra.fr}

\author{Michel Laurent} 
\address{Aix-Marseille Universit\'e, CNRS,  
Institut de Math\'ematiques de Marseille,
163 avenue de Luminy, Case 907, 
13288  Marseille C\'edex 9 (France)}
\email{michel-julien.laurent@univ-amu.fr}

\begin{abstract}
Let $\theta$ and $\rho$ be real numbers with
$0 \le \theta, \rho < 1$ and $\theta$ irrational. 
We show that the Hecke--Mahler series 
$$
F_{\theta, \rho} (z_1, z_2) 
= \sum_{k_1 \ge 1} \, \sum_{k_2 = 1}^{\lfloor k_1 \theta + \rho \rfloor} \, z_1^{k_1} z_2^{k_2},
$$ 
where $\lfloor \cdot \rfloor$ denotes the integer part function, 
takes transcendental values at any algebraic point $(\beta, \alpha)$ with 
$0 < |\beta|, |\beta \alpha^\theta |  < 1$.  
This extends earlier results of Mahler (1929) 
and Loxton and van der Poorten (1977), who settled the case $\rho=0$. 
Furthermore, for positive integers $b$ and  $a$, with $b \ge 2$ 
and $a$ congruent to $1$ modulo $b-1$,  
we give  the continued fraction expansion  of  the number  
 $$
 {(b-1)^2\over b} F_{\theta, \rho} \left({1\over b}, {1\over a}\right)+{\lfloor \theta+\rho\rfloor(b-1)\over b^2a},
 $$
  from which we derive 
a formula giving the irrationality exponent of $F_{\theta, \rho} (1/b, 1/a)$. 
\end{abstract}

\subjclass[2010]{11J04, 11J70, 11J81}
\keywords{rational approximation, continued fraction, Mahler's method, transcendence, Sturmian sequence}

\maketitle

\hfill{\it \`A la m\'emoire du Professeur Andrzej Schinzel}

\tableofcontents

\section{Introduction and main results} \label{intro}

Throughout, $\lfloor \cdot \rfloor$ and $\lceil \cdot \rceil$ are, respectively, the integer part and the upper integer part functions.   
For a real number $\theta$ in $(0, 1)$, set
$$
h_\theta (z) = \sum_{k \ge 1} \, \lfloor k \theta \rfloor z^k,
$$
where $z$ is a complex number with $|z| < 1$, and
$$
F_\theta (z_1, z_2) = \sum_{k_1 \ge 1} \, \sum_{k_2 = 1}^{\lfloor k_1 \theta \rfloor} \, z_1^{k_1} z_2^{k_2},
$$
where $z_1, z_2$ are complex numbers with 
$|z_1| < 1, |z_1 z_2^\theta | < 1$. 
The series $h_\theta (z)$ have been introduced by Hecke \cite{He22} in 1922.
B\"ohmer \cite{Bohm27} proved in 1927 that, if   
$\theta$ has unbounded partial quotients, then 
$h_\theta ( 1/b)$ is transcendental, for every integer $b \ge 2$.    
Two years later, in his fundational paper \cite{Mah29},   
Mahler introduced the two-variables series $F_\theta (z_1, z_2)$ (note that Mahler and most of his 
followers used $\omega$ in place of $\theta$, while we keep the notation from \cite{BuLa21})
and, among other results, he established that $h_\theta (\beta)$ is transcendental 
for every quadratic irrational number $\theta$ and every complex non-zero algebraic number 
$\beta$ in the open unit disc. 
This has been extended to every irrational number $\theta$ in $(0, 1)$ by Loxton and van der Poorten \cite{LovdP77c}
(see also \cite[Section 2.9]{Nish96}) nearly fifty years later.

We adopt a slightly different point of view to generalize the functions $h_\theta$ and $F_\theta$. 
Let $\theta$ and $\rho$ be real numbers with
$0 \le \theta, \rho < 1$ and $\theta$ irrational.
For $n \ge 1$, set
$$
s_n := s_n (\theta, \rho) = \big\lfloor n \theta + \rho \big\rfloor -
\big\lfloor (n-1) \theta + \rho \big\rfloor,
\quad
s'_n := s'_n (\theta, \rho) = \big\lceil n \theta + \rho \big\rceil -
\big\lceil (n-1) \theta + \rho \big\rceil. 
$$
Then, the infinite words
$$
{\bfs}_{\theta, \rho} := s_1 s_2 s_3 \ldots,
\quad
{\bfs}'_{\theta, \rho} := s'_1 s'_2 s'_3 \ldots 
$$
are, respectively, the lower and upper Sturmian words of slope $\theta$
and intercept $\rho$, written over the alphabet $\{0, 1\}$. 
For complex numbers $\alpha, \beta$ with $|\beta \alpha^\theta| < 1$, write
$$
\xi_{\s_{\theta,\rho}}(\beta, \alpha) = \sum_{n\ge 1} s_n \beta^{n} \alpha^{\sum_{h=1}^n s_h} 
= \sum_{n\ge 1} s_n \beta^{n} \alpha^{\lfloor n \theta + \rho \rfloor}. 
$$
Observe that, setting
$$
F_{\theta, \rho} (z_1, z_2) 
= \sum_{k_1 \ge 1} \, \sum_{k_2 = 1}^{\lfloor k_1 \theta + \rho \rfloor} \, z_1^{k_1} z_2^{k_2},
$$ 
we have 
$$
\xi_{\s_{\theta,\rho}}(\beta, \alpha) = (1 - \beta) F_{\theta, \rho} (\beta, \alpha) + \beta^2\alpha \lfloor \theta+ \rho \rfloor,    
$$
for any $\beta, \alpha$ satisfying $|\beta| < 1, |\beta \alpha^\theta | < 1$. 
The notation $\xi_{\s_{\theta,\rho}}$ was introduced in \cite{BuLa21} and we keep it 
in the present work.
The transcendence of $F_{\theta, \rho} (\beta, \alpha)$ for nonzero algebraic numbers $\alpha, \beta$ has been 
widely studied, after the pioneering works of Mahler \cite{Mah29} and 
Loxton and van der Poorten \cite{LovdP77c} in the case $\rho=0$. 
Borwein and Borwein \cite[Theorem 0.4]{BoBo93} established that, if the slope $\theta$ has infinitely many partial quotients 
greater than or equal to $3$, then $\xi_{\s_{\theta,\rho}} (1/b, 1/a)$ is transcendental 
for every positive integers $a, b$ with $b \ge 2$. 
Komatsu \cite{Ko96b} (see also \cite{NiShTa92}) proved that, if the slope $\theta$ has unbounded partial quotients,
then $\xi_{\s_{\theta,\rho}}(\beta, \alpha)$ is transcendental for every nonzero complex algebraic numbers $\alpha, \beta$ with 
$|\beta \alpha^\theta | < 1$, under some technical condition. 
Lastly, Ferenczi and Mauduit \cite{FeMa97} used combinatorial properties of Sturmian sequences 
and Ridout's $p$-adic extension of Roth's theorem to show that $\xi_{\s_{\theta,\rho}}(1/b, 1)$ is 
transcendental for every integer $b \ge 2$. 

Our first main theorem is a considerable extension of all these results. 


\begin{theorem} \label{maintrans}
Let $\theta$ and $\rho$ be real numbers with
$0 \le \theta, \rho < 1$ and $\theta$ irrational.
Let $\alpha, \beta$ be nonzero complex algebraic numbers such that $|\beta \alpha^\theta | < 1$ and $\beta \not= 1$. 
Then, the complex number 
$\xi_{\s_{\theta,\rho}}(\beta, \alpha)$ is transcendental. In particular, if $|\beta| < 1$, then 
the complex numbers 
$$
h_{\theta, \rho} (\beta), \quad F_{\theta, \rho} (\beta, \alpha).   
$$
are transcendental.
\end{theorem}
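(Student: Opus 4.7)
My plan is to argue by contradiction, assuming that $\xi := \xi_{\s_{\theta,\rho}}(\beta,\alpha)$ is algebraic, and to apply the Schmidt subspace theorem to a family of algebraic approximations of $\xi$ built from the self-similar structure of the Sturmian word $\s_{\theta,\rho}$. The combinatorial input is the classical fact that, for any irrational $\theta$, the word $\s_{\theta,\rho}$ admits infinitely many prefixes of the form $UVV$ with $|V| \ge c\,(|U|+2|V|)$ for some positive constant $c = c(\theta)$; this follows from the Ostrowski decomposition of prefixes in terms of the standard factors associated with the convergents of $\theta$. In fact even stronger repetitions of the shape $UV^{2+\eta}$ are available (and these will be needed later to compute the irrationality exponent), but for transcendence a single squared block suffices.

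Given such a prefix of length $N = \ell + 2m$, with $\ell = |U|$ and $m = |V|$, the coincidence of the two copies of $V$ reads $s_{\ell+m+j} = s_{\ell+j}$ for $1 \le j \le m$. Substituting into the partial sum defining $\xi$ yields an identity
\[
\sum_{n=1}^{N} s_n \beta^n \alpha^{t_n} \;=\; A_{\ell,m} + \beta^m \alpha^{T_m}\, B_{\ell,m},
\]
where $T_m = t_{\ell+m} - t_\ell$ (with $t_n = \lfloor n\theta + \rho \rfloor$) and $A_{\ell,m}, B_{\ell,m} \in \Z[\alpha,\beta]$ have total weighted degree $O(N)$. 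Since $|\beta|, |\beta\alpha^{\theta}| < 1$, the tail is bounded in absolute value by $C\,\mu^{N}$ with $\mu = \max(|\beta|,|\beta\alpha^\theta|) < 1$, so the triples $(A_{\ell,m}, B_{\ell,m}, \beta^m\alpha^{T_m})$ lie in the number field $K = \Q(\alpha,\beta)$ and make the archimedean linear form $\xi - X - Y\, B_{\ell,m}$ exponentially small in $N$ when evaluated at $(X,Y) = (A_{\ell,m}, \beta^m \alpha^{T_m})$.

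I would then combine this very small archimedean form with trivial linear forms at every other place of $K$, both archimedean (controlling the conjugate absolute values of $\alpha$ and $\beta$) and, when $\alpha$ or $\beta$ is not a unit, non-archimedean. Provided the repetition rate $c$ is taken large enough that the gain coming from the length-$m$ block dominates the unavoidable growth of heights, the subspace theorem forces infinitely many of these approximating triples to lie in a common proper linear subspace of $K^{3}$. Translating such a relation back into the combinatorics of $\s_{\theta,\rho}$ produces either a contradiction with the assumption $\beta \ne 1$ or an eventual periodicity of $\s_{\theta,\rho}$, which is impossible for irrational slope. The main technical obstacle will be the calibration of the subspace inequality: one must distribute weights among all places of $K$ so that the gain from the repetition outweighs the contribution of the conjugates of $\alpha$ and $\beta$, and this is especially delicate when $|\beta|\ge 1$ and only the combined smallness $|\beta\alpha^\theta|<1$ is available to drive the argument.
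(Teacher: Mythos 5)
Your strategy has a calibration problem that is fatal for general algebraic $\alpha,\beta$, and this is precisely the obstruction that forces the paper onto a completely different route. Write $\ell=|U|$, $m=|V|$, $N=\ell+2m$. The periodic approximant $\xi_{UV^\infty}(\beta,\alpha)$ attached to a square prefix $UVV$ is an element of $K=\Q(\alpha,\beta)$ whose height is of order $(\ell+m)\bigl(h(\alpha)+h(\beta)\bigr)$ --- every conjugate and every denominator of $\alpha$ and $\beta$ contributes --- whereas the only smallness you have is $|\xi-\xi_{UV^\infty}(\beta,\alpha)|\ll|\beta\alpha^\theta|^{\ell+2m}$ at the single distinguished archimedean place. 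In the classical situation $\beta=1/b$, $\alpha=1$ the deficit is repaired by the places dividing $b$ (Ridout), because the height of $\beta^n$ is carried entirely by places where the relevant vector entries are simultaneously small; that is why an exponent $w>1$ suffices there. For general algebraic data this compensation is unavailable: $\log\bigl(1/|\beta\alpha^\theta|\bigr)$ can be arbitrarily small compared with $h(\alpha)+h(\beta)$ (already $\beta=99/100$, $\alpha=1$ shows this, and the situation is worse when $|\beta|\ge1$ and only $|\beta\alpha^\theta|<1$ holds), so no distribution of weights over the places of $K$ makes the subspace inequality hold when the repetition exponent is bounded. One needs the exponent to be \emph{unbounded}, i.e.\ $m/\ell\to\infty$ along a subsequence, which happens exactly when $\theta$ has unbounded partial quotients --- and in that regime the paper shows (Section~\ref{unbounded}) that plain Liouville applied to the approximants $(3)_k$, $(4)_k$ already suffices, no Subspace Theorem needed. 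For bounded partial quotients your approach collapses, and the paper instead runs Mahler's method: the chain of functional equations of Proposition~\ref{chaine}, a compactness argument on the shifted slopes and intercepts, Hecke's functional transcendence (Proposition~\ref{hecke}), and the Loxton--van der Poorten criterion (Theorem~\ref{LvdPchaines}). The introduction explicitly warns that even for $\alpha=1$ the subspace route in the bounded case is ``more involved'' and is deferred to a subsequent paper.

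Two further steps are unjustified even where the inequality could be arranged. First, the endgame: when infinitely many approximation vectors lie in a fixed proper subspace of $K^3$, the standard deduction does not yield eventual periodicity of $\s_{\theta,\rho}$; it yields only that $\xi$ lies in $K=\Q(\alpha,\beta)$, and excluding algebraic values \emph{inside} $K$ is a separate, genuinely hard problem --- this is exactly why \cite{AdBu07b} concludes only ``transcendental or in $\Q(\beta)$'', a strictly weaker statement than Theorem~\ref{maintrans}. Your sentence about deriving ``a contradiction with $\beta\ne1$ or an eventual periodicity'' hides this entire difficulty. Second, the combinatorial input: the assertion that every Sturmian word, for every intercept $\rho$, begins with infinitely many squares $UVV$ with $|V|\ge c(|U|+2|V|)$ is not automatic (initial powers of Sturmian words can be weak for unfavourable intercepts) and would need a proof from the decomposition $V_{k+1}=V_k^{a_{k+1}-b_{k+1}}V_{k-1}V_k^{b_{k+1}}$ of Proposition~\ref{combsturm}, taking into account that $a_{k+1}-b_{k+1}$ may equal $0$ or $1$ for all large $k$.
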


Since, for every $\alpha$ in the open unit disc, we have
$$
\xi_{\s_{\theta,\rho}}(1, \alpha) = \frac{\alpha}{1 - \alpha},
$$
the assumption $\beta \not= 1$ in Theorem \ref{maintrans} is necessary. 

When the slope $\theta$ has unbounded partial quotients in its continued fraction expansion, 
Theorem \ref{maintrans} was proved by Komatsu \cite{Ko96b}, under some mild additional assumption on $\alpha$ 
and $\beta$. For the sake of completeness, we display a complete proof in Section \ref{unbounded}. 

Adamczewski and Bugeaud \cite[Proposition 11.1]{AdBu11} proved that the Diophantine exponent 
(which measures the repetitions occurring at the beginning or near the beginning of an    
infinite word, see \cite[p. 70]{AdBu11}) of a Sturmian 
sequence is infinite if and only if its slope $\theta$ has unbounded partial quotients, independently of the 
value of its intercept $\rho$. Under this assumption, the $p$-adic 
Schmidt Subspace Theorem applies to show that, for every $\rho$ in 
$[0, 1)$ and every nonzero algebraic number $\beta$ in the open unit disc, the complex number 
$\xi_{\s_{\theta,\rho}}(\beta, 1)$ is either transcendental, or lies in $\Q (\beta)$; see \cite[Theorem 1]{AdBu07b}. 
A different, more involved, application of the $p$-adic Schmidt Subspace Theorem allows us to get    
the same conclusion if $\theta$ has bounded partial quotients; details will be given in a subsequent paper.

The proof of Theorem \ref{maintrans} follows Mahler's method and its extension 
by Loxton and van der Poorten \cite{LovdP77c}. 
A key point is the following construction, leading to a chain of functional 
equations. For an irrational real number $\theta$ in $(0, 1)$, write 
$$
\theta = [0 ; a_1, a_2, \ldots], \quad 
\theta_k = [0; a_{k+1}, a_{k+2}, \ldots ], \quad k \ge 0,   
$$
in such a way that 
$$
\theta_0 = \theta,  \quad 
\theta_{k+1} = {1\over \theta_k}  - a_{k+1} =\left\{ {1\over \theta_k}\right\},  \quad k \ge 0,  
$$
where $\{ \cdot \}$ denotes the fractional part function.  
Let $(p_k / q_k)_{k \ge 0}$ denote the sequence of convergents to $\theta$. 
An elementary calculation yields the equation
$$
F_{{\theta_k},0} (z_1, z_2) = - F_{{\theta_{k+1}, 0}} (z_1^{a_{k+1}} z_2, z_1) + 
{z_1^{a_{k+1}+1} z_2 \over (1 - z_1^{a_{k+1}} z_2) (1 - z_1) }.   
$$
When $\theta$ is a quadratic irrational, the sequence $(\theta_k)_{k \ge 1}$ 
is ultimately periodic, and this chain of functional equation yields a single functional equation.
Namely, assuming that $\theta_{k+s} = \theta_k$ for $k \ge 0$  
and an even positive integer $s$, we end up with 
a functional equation of the form 
$$
F_{\theta, 0} (z_1, z_2)  = 
F_{\theta, 0} (z_1^{q_s}  z_2^{p_s}, z_1^{q_{s-1}} z_2^{p_{s-1}})  
+ R(z_1, z_2),                               
$$
where $R(z_1, z_2)$ is in $\Q(z_1, z_2)$ and which has
been treated by Mahler \cite{Mah29}. 
In general, we have  a system of functional equations
$$
F_{\theta, 0} (z_1, z_2) = 
(-1)^{k} F_{{\theta_{k}, 0}} (z_1^{q_{k}}  z_2^{p_{k}}, z_1^{q_{k-1}} z_2^{p_{k-1}})                       
+ R_k (z_1, z_2),  \quad k\ge 1.
$$
Loxton and van der Poorten \cite{LovdP77c}  developed a general theory which applies to such chains of equations 
under some technical constraints. These assumptions may be  satisfied (for a suitable subsequence of the indices $k$) 
if we assume that  the sequence $(a_k)_{k\ge 1}$   is bounded. 
By means of our new result on the structure of Sturmian sequences \cite{BuLa21} (see Proposition \ref{combsturm} below),  
we are able to show that this approach also works for the more general 
series $F_{\theta, \rho} (z_1, z_2)$. 
The unbounded case, treated in Section \ref{unbounded},  is related to the second part of our paper 
devoted to continued fractions expansions.

We stress an immediate consequence of Theorem \ref{maintrans}. 
For more on $\beta$-expansions of real numbers, the reader is directed to \cite{AdBu07b} and the references given therein. 

\begin{corollary}
Let $\alpha$ and $\beta$ be real algebraic numbers with $\beta > 1$. 
Then, the $\beta$-expansion of $\alpha$ is not given by a Sturmian sequence. 
\end{corollary}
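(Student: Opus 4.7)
The plan is to derive the corollary as a direct specialization of Theorem \ref{maintrans}. I argue by contradiction: suppose that $\alpha$ is real algebraic and that its $\beta$-expansion (passing to the fractional part if necessary, so we may assume $\alpha \in [0,1)$) is the digit sequence of a Sturmian word. Since every Sturmian word lives on a two-letter alphabet, which we take to be $\{0,1\}$, it coincides with a lower Sturmian sequence $\s_{\theta,\rho}$ for some irrational $\theta\in(0,1)$ and some $\rho\in[0,1)$. Writing $s_n$ for the $n$-th digit, we then have
$$
\alpha \;=\; \sum_{n \ge 1} s_n \beta^{-n}.
$$

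The second step is to identify this sum with a value of the series $\xi_{\s_{\theta,\rho}}$ appearing in Theorem \ref{maintrans}. Setting the second argument equal to $1$ in the definition of $\xi_{\s_{\theta,\rho}}$ collapses the factor $\alpha^{\lfloor n\theta+\rho\rfloor}$ to $1$, so
$$
\xi_{\s_{\theta,\rho}}(1/\beta,\,1) \;=\; \sum_{n \ge 1} s_n (1/\beta)^n \;=\; \alpha.
$$
I then apply Theorem \ref{maintrans} with the theorem's pair $(\beta,\alpha)$ taken to be $(1/\beta,\,1)$: both entries are nonzero algebraic numbers, the hypothesis $\beta\neq 1$ of the theorem holds since $1/\beta<1$, and $|(1/\beta)\cdot 1^{\theta}| = 1/\beta < 1$. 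The theorem then forces $\xi_{\s_{\theta,\rho}}(1/\beta,1)=\alpha$ to be transcendental, contradicting the algebraicity of $\alpha$.

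There is essentially no obstacle: the corollary is a transparent rewriting once the $\beta$-expansion is recognized as a specialization of $\xi_{\s_{\theta,\rho}}$. The only point worth a brief comment is the convention for ``Sturmian sequence'': if one also allows the upper variant $\s'_{\theta,\rho}$, then since upper and lower Sturmian sequences of the same slope differ only by a suitable change of intercept, the same argument covers that case as well.
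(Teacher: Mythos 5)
Your proof is correct and is exactly the intended (and only) route: the paper states this corollary as an immediate consequence of Theorem \ref{maintrans}, obtained precisely by the specialization $\xi_{\s_{\theta,\rho}}(1/\beta,1)=\sum_{n\ge1}s_n\beta^{-n}$ that you carry out, with the hypotheses $|(1/\beta)\cdot 1^{\theta}|<1$ and $1/\beta\neq1$ checked as you do. The only point I would tighten is your phrase ``which we take to be $\{0,1\}$'': if the two digits are $c<d$ rather than $0,1$, write $x_n=c+(d-c)s_n$ so that $\alpha-\frac{c}{\beta-1}=(d-c)\,\xi_{\s_{\theta,\rho}}(1/\beta,1)$, which reduces to your case since $\beta$ is algebraic; this is a one-line fix, not a gap.
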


Theorem \ref{maintrans} asserts that any power series whose sequence of coefficients is 
a Sturmian sequence of integers sends non-zero algebraic points in the unit disc to transcendental points. 
This is not the case for every automatic series, as shown by Adamczewski and Faverjon \cite[Section 8.1]{AdFa17}, 
who gave the example of an automatic series taking an 
algebraic value at any point of the form $\phi^{1 / 3^\ell}$, where $\phi = (1 - \sqrt{5})/2$
and $\ell \ge 1$.

Let $a$ and $b$ be positive integers with $b\ge2$. By Theorem \ref{maintrans}, the real numbers $\xi_{\s_{\theta,\rho}}(1/b, 1/a)$
and $\xi_{\s'_{\theta,\rho}}(1/b, 1/a)$ are transcendental. 
We now deal with the continued fraction expansion of the real numbers $\xi$ of the form
$$
(b-1)\xi_{\s_{\theta,\rho}}(1/b, 1/a) \quad {\rm or}\quad  (b-1)\xi_{\s'_{\theta,\rho}}(1/b, 1/a).
$$
When $a=1$, we will recover the expansion of Sturmian numbers obtained in \cite{BuLa21}.

We denote by  $(b_k)_{k \ge 1}$  the sequence of digits of the number
\begin{equation}\label{defbk}
\rho -\theta = \sum_{k\ge 0} b_{k+1}(q_k\theta-p_k), 
\end{equation}
written in the Ostrowski numeration system 
with base $\theta$ (normalized as in Theorem 2.1 of \cite{BuLa21} or in Theorem 4.2 when  $\rho$ is of the form $-m \theta + p$, with $m, p$ nonnegative integers). We set 
(by convention, an empty sum is equal to zero)
\begin{equation}\label{defrktk}
t_k = \sum_{ j=1}^{k} b_j q_{j-1} ,\, \, \tit_k =  \sum_{ j=1}^{k} b_j p_{j-1}, \quad 
r_k =q_k-t_k, \,\,\tir_k =p_k - \tit_k, \quad k\ge 0.
\end{equation}
For $k \ge 0$, set 
$$
\begin{aligned}
c_k = &
\begin{cases}
 {b^{a_1-b_1}a-b\over b-1},   \quad  & \text{when} \quad  k= 0,
\\
{b^{r_{k} + q_{k-1}}a^{\tir_{k}+p_{k-1} }
\left(\left(b^{q_k}a^{p_k}\right)^{a_{k+1} - b_{k+1} - 1}  - 1 \right)\over b^{q_{k}}a^{p_k} - 1},
 \quad  & \text{when} \quad  k\ge 1, 
 \end{cases}
\\
d_k =& b^{t_{k}}a^{\tit_k} - 1,
\\
e_k =& b^{r_{k}} a^{\tir_k}- 1, 
\\
f_k = & b^{t_{k}} a^{\tit_k}  {(b^{q_{k}}a^{p_k})^{b_{k+1}} - 1 \over b^{q_{k}}a^{p_k} - 1}.
\end{aligned}
$$
When $a=1$, the four sequences $(c_k)_{k\ge 0}, (d_k)_{k\ge 0}, (e_k)_{k\ge 0}, (f_k)_{k\ge 0}$ 
coincide with the corresponding ones introduced in \cite{BuLa21}. 
We point out that some elements of these  sequences may be non-positive, exactly in the same situations as in \cite{BuLa21}. 
For example, $f_k$ is equal to $0$ when $b_{k+1} = 0$ 
and $c_{k+1}$ is equal to $0$ when $a_{k+2} = b_{k+2} + 1$. In the case where $a_{k+2} = b_{k+2}$, we have $b_{k+1} = 0$, thus 
$r_{k} + q_{k+1} = r_{k+1} + q_{k}, \tir_k +p_{k+1}= \tir_{k+1}+p_k$, 
so that
$$
\eqalign{
c_{k+1} & = b^{r_{k+1} + q_{k}}a^{\tir_{k+1}+p_k} \, {(b^{q_{k+1}} a^{p_{k+1}})^{-1}- 1 \over b^{q_{k+1}}a^{p_{k+1}} - 1} 
\cr 
& =
{b^{r_{k}}a^{\tir_k} - b^{r_{k} + q_{k+1}}a^{\tir_k+p_{k+1}} \over b^{q_{k+1}}a^{p_{k+1}} - 1 } = - b^{r_{k}}a^{\tir_k} = - e_{k} - 1 
\cr}
$$
is  negative. Notice as well  that $e_k$ is always positive, because $b\ge 2, a\ge 1, r_k \ge 1$,  and that  $d_k$ is non-negative and vanishes if and only if $t_{k}=\tit_k=0$, that is to say when  $b_1=\cdots = b_{k}=0$.

Keeping this in mind, and with some abuse of language, the next theorem asserts that 
$$
[0 ; c_0, d_0, 1, e_0, f_0, c_1, d_1, 1, e_1, f_1, c_2, \ldots ]
$$
is an (improper) continued fraction expansion of $\xi$. 
In order to rule out non-positive elements in the sequence 
$$
c_0, d_0, 1, e_0, f_0, c_1, d_1, 1, e_1, f_1, c_2, \ldots 
$$
we apply to it some contraction rules. 
The precise statement is as follows.

\begin{theorem}  \label{fraccont}
 Let $a$ and $b$ be positive integers. 
Assume  that $b \ge 2 $ and that $a$ is conguent to $1$ modulo $b-1$. 
Let $A_1, A_2, A_3, \ldots$ be the sequence of positive integers obtained from the  sequence 
$c_0, d_0, 1$, $e_0, f_0, c_1, d_1, 1, e_1, f_1, c_2, \ldots$ after the application of the following rules:
\\
$(i)$ For any $k\ge 0$ such that $a_{k+2}=b_{k+2}$, replace the string of the 9 consecutive terms
$$
c_k, d_k, 1, e_k, f_k=0, c_{k+1}= -e_k-1,d_{k+1}=d_k,1, e_{k+1} 
$$ 
by the single element
$ c_{k} + e_{k+1} + 1 $.
\\
$(ii)$ Replace any three consecutive elements of this new sequence of the form $x, 0, y$ by the integer $x+y$ ($x$ and $y$ may vanish) and  continue the reduction until one obtains positive integers.
\\
Then,  the  continued fraction expansion of $\xi$ is given by
$$
\xi = [0 ; A_1, A_2, A_3, \ldots ]. 
$$
\end{theorem}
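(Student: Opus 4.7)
The plan is to extend the argument of the companion paper \cite{BuLa21}, which treats the special case $a=1$, to the two-parameter setting. The overall scheme is to produce, for each $k \ge 0$, an explicit rational approximation $\xi_k$ of $\xi$, obtained by truncating the series $\xi_{\s_{\theta,\rho}}(1/b,1/a)$ at a position dictated by the combinatorial structure of the Sturmian word $\s_{\theta,\rho}$ described in Proposition \ref{combsturm}. Each such truncation will contribute a short block of partial quotients to the continued fraction of $\xi$; concatenating these blocks yields the formal list $c_0,d_0,1,e_0,f_0,c_1,\ldots$ prior to reduction. The congruence $a \equiv 1 \pmod{b-1}$ enters at this stage to clear the factor $b-1$ from the geometric sums appearing in the closed form of $(b-1)\xi_k$, so that this quantity lies in $\Z[1/b,1/a]$ with denominator dividing $b^{t_k} a^{\tit_k}(b^{q_k}a^{p_k}-1)$.

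The heart of the proof lies in the identification of the partial quotients added when passing from $\xi_k$ to $\xi_{k+1}$. I would compute the difference $\xi_{k+1}-\xi_k$ explicitly in terms of the Ostrowski digits $b_j$, the convergents $p_j/q_j$, and the auxiliary quantities $t_j, \tit_j, r_j, \tir_j$ introduced in \eqref{defrktk}, and then apply a variant of the classical folding lemma for continued fractions: given a rational $p/q$ with known expansion, $p/q + 1/(qQ)$ (or its reflected variant) admits a continued fraction obtained by appending a predictable block involving $Q$. Carrying out the Euclidean division implicit in the difference $\xi_{k+1}-\xi_k$ produces exactly the block $c_k,d_k,1,e_k,f_k$ with the closed-form expressions given in the statement. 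Letting $k \to \infty$ and using that the intermediate rationals arising in this construction are genuine convergents of $\xi$ yields, in the generic case where all entries $c_k,d_k,e_k,f_k$ are positive integers, the expansion $[0;c_0,d_0,1,e_0,f_0,c_1,\ldots]$.

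The remaining work, and the main obstacle, is to handle the degenerate cases in which some entries $c_k,d_k,e_k,f_k$ vanish or become negative. The excerpt already records the two mechanisms: $f_k=0$ whenever $b_{k+1}=0$, and $c_{k+1}=-e_k-1$, $d_{k+1}=d_k$ whenever $a_{k+2}=b_{k+2}$; one also has $d_k=0$ when $b_1=\cdots=b_k=0$. Each coincidence is resolved either by the contraction identity $[\ldots,x,0,y,\ldots]=[\ldots,x+y,\ldots]$ (rule (ii)) or by the nine-term collapse encoded in rule (i), which is itself an instance of a mirror formula for reversed continued fraction blocks. The delicate point is to prove that no cascading simplification remains: after rules (i) and (ii) have been exhaustively applied, every $A_i$ must be a positive integer, and no further contraction should occur across block boundaries. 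This follows, as in \cite{BuLa21}, from the admissibility constraints satisfied by the Ostrowski digits $(b_k)_{k\ge 1}$, but the bookkeeping is now burdened by the additional exponents $\tit_k$ and $\tir_k$ coming from the second base $a$, and checking every case carefully is the most laborious part of the proof.
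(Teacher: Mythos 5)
Your outline reproduces the global strategy of the paper --- periodic rational approximants attached to the combinatorial decomposition of $\s_{\theta,\rho}$, a block of partial quotients per Ostrowski step, and the reduction rules $(i)$--$(ii)$ controlled by the admissibility constraints on $(b_k)_{k\ge1}$ --- but the step you describe as ``carrying out the Euclidean division implicit in $\xi_{k+1}-\xi_k$'' is where the entire theorem lives, and as proposed it does not go through. First, the folding lemma (in any of its standard variants) requires the increment to be $\pm 1$ divided by a product of consecutive denominators; here the relevant differences have numerators of the shape $(b-1)^2\,b^{t_{k+1}}a^{\tit_{k+1}}$ (see Lemma \ref{recsigmak}), which are $>1$ in general, and indeed the new partial quotients are not a reversed copy of the old ones, so no folding identity produces them. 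Second, a single sequence of approximants cannot certify a block of five partial quotients: each step creates five new convergents, and every one of them must be exhibited as an explicit rational whose tail agrees with $\xi$ to the right precision. The paper does this by introducing \emph{four} families of periodic approximants per step, associated to the words $R_k(M_k^{a_{k+1}-b_{k+1}-1}M_{k-1})^\infty$, $(R_{k+1}T_k)^\infty$, $R_{k+1}M_k^\infty$, $V_{k+1}^\infty$, and by verifying --- separately for numerators and denominators --- the five mediant relations of Lemma \ref{rec(j)k}, e.g.\ $(1)_k=c_k\cdot(4)_{k-1}\oplus(3)_{k-1}$, which is exactly the assertion that $c_k,d_k,1,e_k,f_k$ are the next partial quotients. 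That verification rests on the word identities $M_k=T_kR_k$, $V_k=R_kT_k$ and the recursions \eqref{recTk}--\eqref{recRk} of Proposition \ref{combsturm}; none of it is automatic, and none of it is supplied by your sketch.

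Two smaller corrections. The hypothesis $a\equiv 1\pmod{b-1}$ is not needed to ``clear $b-1$ from the geometric sums'': for $k\ge 1$ the quantities $c_k,d_k,e_k,f_k$ are integers unconditionally (the only divisions occurring are by $b^{q_k}a^{p_k}-1$ into geometric sums), and the congruence is used solely to make $c_0=(b^{a_1-b_1}a-b)/(b-1)$ an integer. And the concluding limit argument is not simply that ``the intermediate rationals are genuine convergents'': one must first complete the reductions $(i)$--$(ii)$ to obtain positive integer entries $A_j$ (so that $[0;A_1,A_2,\ldots]$ is a bona fide continued fraction), and then observe that infinitely many of its convergents equal $(j)_k=(b-1)\xi_{W}(1/b,1/a)$ for ultimately periodic words $W$ sharing prefixes of unbounded length with $\s_{\theta,\rho}$, which is what forces the value of the continued fraction to be $\xi$.
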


Observe that the sequence $(A_j)_{j \ge 1}$ is well-defined. Indeed, $c_k$ and $c_{k+1}$ 
cannot be both negative, since we cannot have simultaneously $a_{k+1} = b_{k+1}$ and $a_{k+2} = b_{k+2}$ by Ostrowski numeration rules. The process $(ii)$ enables us to get rid of the $0$ after having ruled out the negative terms using  $(i)$.  The  occurrences of  $0$, after performing  the rule $(i)$,    are fully described thanks to  the six cases  displayed in Section 7 of \cite{BuLa21}, which remain unchanged in our setting. 
For convenience, we reproduce the list below.

$(ii)_1$  $\, b_k=0$ and $a_{k+2}=b_{k+2}$ with $k\ge 1$, corresponding to the string 
$$
1, e_{k-1}, f_{k-1}=0, c_k+e_{k+1}+1,f_{k+1}, \quad {\rm  where} \quad e_{k-1} >0 , \,  c_k+e_{k+1}+1 >0 , \, f_{k+1}>0.
$$

$(ii)_2$  $\, b_{k+1}=0$,  $ t_{k+1} \ge 1$ and $a_{k+2} \ge b_{k+2}+2 $ with $k\ge 0$, corresponding to the string 
$$
 1, e_{k}, f_{k}=0, c_{k+1}, d_{k+1}, \quad {\rm  where} \quad e_{k} >0 , \,  c_{k+1}>0, \, d_{k+1} >0.
$$

$(ii)_3 $  $\, b_{k+1} \ge 1$  and $a_{k+2} = b_{k+2}+1 $ with $k\ge 0$, corresponding to the string  
$$
e_k, f_{k}, c_{k+1}=0, d_{k+1},1, \quad {\rm  where} \quad  e_k >0, \, f_k>0, \, d_{k+1} >0.
$$

$(ii)_4 $  $\, b_{k+1} =0 $, $t_{k+1} \ge 1$  and $a_{k+2} = b_{k+2}+1 $ with $k\ge 0$, corresponding to the string  
$$
1,e_k, f_{k}=0, c_{k+1}=0, d_{k+1},1, \quad {\rm  where} \quad  e_k >0, \, d_{k+1} >0.
$$

$(ii)_5 $  $\, t_{k+1} =0 $   and $a_{k+2} \ge  b_{k+2}+2 $ with $k\ge 0$, corresponding to the string  
$$
1,e_k, f_{k}=0, c_{k+1}, d_{k+1}=0,1,e_{k+1}, \quad {\rm  where} \quad  e_k >0, \,c_{k+1} >0 , \,  e_{k+1} >0.
$$

$(ii)_6 $  $\, t_{k+1} =0 $   and $a_{k+2} =  b_{k+2}+1 $ with $k\ge 0$, corresponding to the string  
$$
1,e_k, f_{k}=0, c_{k+1}=0, d_{k+1}=0,1,e_{k+1}, \quad {\rm  where} \quad  e_k >0, \,  e_{k+1} >0.
$$

\medskip 
As a simple  example, we obtain  the
\begin{corollary}
  Assume that  $a_k - b_k \ge 2$ and $b_k \ge 1$ for every $k \ge 1$.     
Then, the continued fraction expansion of $\xi$ is given by  
   $$
\xi = [0 ; c_0 + 1, e_0, f_0, c_1, d_1, 1, e_1, f_1, c_2, \ldots ].     
$$
\end{corollary}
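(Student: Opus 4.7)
The plan is to verify that under the hypotheses $a_k - b_k \ge 2$ and $b_k \ge 1$ for every $k \ge 1$, the contraction rules $(i)$ and $(ii)$ of Theorem \ref{fraccont} simplify in an essentially trivial way, producing exactly the expansion claimed. This is a verification: I will check which entries of the raw sequence $c_0, d_0, 1, e_0, f_0, c_1, d_1, 1, e_1, f_1, c_2, \ldots$ are zero or negative, and apply the rules accordingly.

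First, I would observe that rule $(i)$ is never triggered. Rule $(i)$ applies exactly when $a_{k+2} = b_{k+2}$ for some $k \ge 0$, but the hypothesis $a_j - b_j \ge 2$ for all $j \ge 1$ forces $a_{k+2} \ge b_{k+2} + 2$, so this never happens. Consequently the sequence to which rule $(ii)$ is applied is literally $c_0, d_0, 1, e_0, f_0, c_1, d_1, 1, e_1, f_1, c_2, \ldots$.

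Next, I would inspect the sign of each term. From the definitions: (a) $e_k > 0$ always, since $b \ge 2$ and $r_k \ge 1$; (b) $c_k = 0$ would require $a_{k+2} = b_{k+2} + 1$, which is ruled out by $a_{k+2} - b_{k+2} \ge 2$, so $c_k > 0$ for every $k \ge 1$, while for $k = 0$ the assumption $a_1 - b_1 \ge 2$ gives $c_0 = (b^{a_1-b_1}a - b)/(b-1) > 0$; (c) $f_k = 0$ would require $b_{k+1} = 0$, which is ruled out by the assumption $b_j \ge 1$ for all $j \ge 1$, so $f_k > 0$ for every $k \ge 0$; (d) for $k \ge 1$, the hypothesis $b_1 \ge 1$ gives $t_k \ge q_0 = 1$, hence $d_k = b^{t_k} a^{\tilde t_k} - 1 > 0$. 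The only vanishing entry is therefore $d_0$: by the convention that an empty sum is zero, $t_0 = \tilde t_0 = 0$, hence $d_0 = b^0 a^0 - 1 = 0$.

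It then remains to apply rule $(ii)$ to the single occurrence of a zero. The string $c_0, d_0, 1 = c_0, 0, 1$ contracts to $c_0 + 1$, and every other entry of the resulting sequence is already a positive integer, so no further contraction is needed. Invoking Theorem \ref{fraccont} yields
$$
\xi = [0 ; c_0 + 1, e_0, f_0, c_1, d_1, 1, e_1, f_1, c_2, \ldots ],
$$
as claimed. There is no genuine obstacle; the only subtle point is the boundary behaviour at $k = 0$, where $d_0$ is forced to vanish by the empty-sum convention regardless of the $b_k$'s, so that exactly one application of rule $(ii)$ is needed.
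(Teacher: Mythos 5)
Your proof is correct and follows the same route as the paper, which simply observes that $d_0=0$ while every other entry of the sequence $c_0,d_0,1,e_0,f_0,c_1,d_1,1,\ldots$ is positive, so that a single application of rule $(ii)$ contracts $c_0,0,1$ into $c_0+1$; your version just spells out the sign checks in more detail. (One harmless index slip: $c_k=0$ for $k\ge 1$ corresponds to $a_{k+1}=b_{k+1}+1$, not $a_{k+2}=b_{k+2}+1$, but either is excluded by the hypothesis.)
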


\begin{proof} Observe that $d_0= 0$, while all the other elements of the sequence 
$$
c_0, d_0, 1, e_0, f_0, c_1, d_1, 1, e_1, f_1, c_2, \ldots 
$$
 are positive. 
 \end{proof}

When $a$ and $b$ are positive integers with $b \ge 2$ and $a$ not congruent to $1$ modulo $b-1$, we get the 
regular continued fraction expansion of $1/\xi - (c_0 + 1) = 1/\xi - (b^{a_1 - b_1} a - 1)/(b-1)$.

As a consequence of Theorem \ref{fraccont}, we obtain an expression for the 
irrationality exponent of any real number $\xi$ as above in terms of its slope and its intercept. 

Keep our notation and define
$$
\nu_k(1) = 2+ {t_k\over r_{k+1}}, \quad \nu_k(2)
= 2+ {r_{k} \over r_{k+1}+ t_k}, 
$$
$$
\nu_k(3) = 1+ { q_{k+1}\over r_{k+1}+ q_k},
\quad \nu_k(4)= 1+ { r_{k+2}\over q_{k+1}}.
$$
Put 
$$
\eqalign{
\nu(1) & = \limsup_{k \to + \infty} \, \{ \nu_k(1) \, : \, a_{k+1}-b_{k+1} \ge 1 
\hbox{ and $a_{k+2}-b_{k+2} \ge 1$}\},
\cr
\nu(2)  &= \limsup_{k \to + \infty} \, \{ \nu_k(2) \, : \, a_{k+2}-b_{k+2} \ge 1 \},
\cr}
$$
and, for $j = 3, 4$, 
$$
\nu(j) = \limsup_{k \to + \infty}  \nu_k(j).
$$

\begin{theorem}\label{expirrat}
Let $b \ge 2$ and $a \ge 1$ be integers. 
The irrationality exponent of $\xi_{\s_{\theta,\rho}}(1/b, 1/a)$ (resp., of $\xi_{\s_{\theta,\rho}}(1/b, 1/a)$) 
is equal to
$$
\max\{ \nu (1), \nu (2), \nu (3), \nu (4) \}. 
$$
\end{theorem}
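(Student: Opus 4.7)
The plan is to read off the irrationality exponent of $\xi$ directly from its explicit continued fraction expansion provided by Theorem \ref{fraccont}, using the classical formula
$$
\mu(\xi) = 2 + \limsup_{n \to \infty} \frac{\log A_{n+1}}{\log Q_n},
$$
where $P_n/Q_n$ are the convergents of $\xi = [0; A_1, A_2, \ldots]$. To apply this, one must estimate the partial quotients $A_{n+1}$, estimate the corresponding denominators $Q_n$, and then match the four critical ratios with $\nu_k(1), \ldots, \nu_k(4)$.

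First, using the recursions $q_{k+1} = a_{k+1}q_k + q_{k-1}$ and $p_{k+1} = a_{k+1}p_k + p_{k-1}$, the relations $t_{k+1} = t_k + b_{k+1}q_k$ and $\tit_{k+1} = \tit_k + b_{k+1}p_k$, together with $r_k = q_k - t_k$ and $\tir_k = p_k - \tit_k$, a direct computation yields the asymptotics
$$
c_k \asymp b^{r_{k+1}} a^{\tir_{k+1}}, \qquad d_k \asymp b^{t_k} a^{\tit_k}, \qquad e_k \asymp b^{r_k} a^{\tir_k}, \qquad f_k \asymp b^{t_{k+1}} a^{\tit_{k+1}},
$$
whenever these entries survive the contraction rules of Theorem \ref{fraccont}. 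Second, using $Q_n \asymp A_1 A_2 \cdots A_n$ up to a bounded multiplicative constant, one expresses $\log Q_n$ at each of the four distinguished positions (just before a $d_k$, before an $e_k$, before an $f_k$, and before a $c_{k+1}$) as an explicit linear combination of the $q_j$'s and $t_j$'s, with an analogous contribution in $p_j, \tit_j$ supplying the coefficient of $\log a$.

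Third, I would match each of the four combinations $\log A_{n+1}/\log Q_n$ with one of $\nu_k(j) - 2$. The side conditions attached to $\nu(1)$ and $\nu(2)$ --- namely $a_{k+1} - b_{k+1} \ge 1$ and/or $a_{k+2} - b_{k+2} \ge 1$ --- are precisely the non-degeneracy conditions guaranteeing that $d_k$, respectively $e_k$, is not absorbed by the contraction rule $(i)$ or by one of the reductions $(ii)_1$--$(ii)_6$; in the two remaining positions (producing $\nu_k(3)$ and $\nu_k(4)$) no such assumption is needed because the corresponding partial quotients always survive up to an inner contribution comparable to $f_k$ and $c_{k+1}$. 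Taking the limit superior over $k$ and over the four types then yields the announced formula.

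The main obstacle is the careful bookkeeping through the contraction rules: when several consecutive entries of the raw sequence $c_k, d_k, 1, e_k, f_k, c_{k+1}, \ldots$ are merged or annihilated, one must verify that each new partial quotient is comparable to the dominant original entry, and that the shifted denominator $Q_n$ differs only by a bounded multiplicative factor from the denominator computed in the absence of contraction. This parallels the analysis carried out in \cite{BuLa21} for the case $a=1$; the generalization amounts to systematically replacing powers of $b$ by monomials $b^\ast a^\ast$, since the underlying Ostrowski combinatorics is unchanged. Finally, the remark following Theorem \ref{fraccont} handles the case $a \not\equiv 1 \pmod{b-1}$: replacing $\xi$ by $1/\xi - (c_0 + 1)$ preserves the irrationality exponent and reduces to the congruent case already treated.
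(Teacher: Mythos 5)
Your overall strategy --- reading the exponent off the explicit continued fraction of Theorem \ref{fraccont} via $\mu(\xi)=2+\limsup_n \log A_{n+1}/\log Q_n$ --- is genuinely different from the paper's proof. The paper avoids essentially all of this bookkeeping: since the contractions $(i)$ and $(ii)$ depend only on the sequences $(a_k)_{k\ge1}$ and $(b_k)_{k\ge1}$ and not on $a,b$, the $j$-th convergent denominator $Q_j$ of $\xi$ and the denominator $Q'_j$ of the corresponding number with $a=1$ satisfy $Q'_j \gg\ll Q_j^{\varphi}$ with $\varphi=\log b/\log (ba^{\theta})$ (this uses Lemma \ref{rktirk}), whence $\mu(\xi)=1+\limsup_j \log Q_{j+1}/\log Q_j=\mu(\xi')$ and the result follows from the case $a=1$ established in \cite[Theorem 2.4]{BuLa21}. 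Your direct route is viable in principle --- it amounts to redoing the computation of \cite{BuLa21} with $b^{*}$ replaced by $b^{*}a^{*}$ --- but the reason the final answer is independent of $a$ is precisely Lemma \ref{rktirk} ($t_k\theta-\tit_k=O(1)$, $r_k\theta-\tir_k=O(1)$): without it your ratios have the form $(t_k\log b+\tit_k\log a)/(r_{k+1}\log b+\tir_{k+1}\log a)$ and there is no a priori reason they should collapse to $t_k/r_{k+1}$. This ingredient is missing from your outline.

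More seriously, two of your four asymptotics are wrong, and not by bounded factors. Since $((b^{q_k}a^{p_k})^{m}-1)/(b^{q_k}a^{p_k}-1)\asymp (b^{q_k}a^{p_k})^{m-1}$, the definitions together with Lemma \ref{recrk} give
$$
c_k \asymp b^{r_{k+1}-q_k}\,a^{\tir_{k+1}-p_k}, \qquad f_k \asymp b^{t_{k+1}-q_k}\,a^{\tit_{k+1}-p_k},
$$
whereas you claim $c_k\asymp b^{r_{k+1}}a^{\tir_{k+1}}$ and $f_k\asymp b^{t_{k+1}}a^{\tit_{k+1}}$; the discrepancy $b^{q_k}a^{p_k}$ is unbounded and feeds directly into the exponent. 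With the correct values one finds, using $r_{k+1}+t_{k+1}=q_{k+1}$ and $Q_{(3)_k}\asymp (ba^{\theta})^{r_{k+1}+q_k}$, $Q_{(4)_k}\asymp (ba^{\theta})^{q_{k+1}}$, that the two corresponding contributions are $2+(t_{k+1}-q_k)/(r_{k+1}+q_k)=\nu_k(3)$ and $2+(r_{k+2}-q_{k+1})/q_{k+1}=\nu_k(4)$; your stated asymptotics would instead produce $\nu(3)$ and $\nu(4)$ each inflated by $1$, contradicting the very formula you are proving. Finally, the step you yourself flag as the main obstacle --- verifying through the reductions $(ii)_1$--$(ii)_6$ that each surviving partial quotient and each denominator is comparable to the dominant uncontracted entry --- is left entirely undone; the paper's reduction to the case $a=1$ is designed precisely so that this verification need not be repeated.
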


Theorem \ref{expirrat} extends \cite[Theorem 2.4]{BuLa21} which covers the case $a=1$.

\section{Sturmian words}

We collect in this Section some important properties of the Sturmian words $\s_{\theta,\rho}$ and $\s'_{\theta,\rho}$,  obtained in \cite{BuLa21}. Recall 
that $(p_k / q_k)_{k \ge 0}$ is the sequence of convergents to $\theta = [0; a_1, a_2, \ldots ]$ and that    
the sequences $(b_k)_{k \ge 1}$, $(r_k)_{k \ge 0}$, $(t_k)_{k \ge 0}$, $(\tir_k)_{k \ge 0}$, $(\tit_k)_{k \ge 0}$ are defined  
in \eqref{defbk} and \eqref{defrktk}.

\begin{lemma} \label{recrk}
We have 
$$
r_0= 1,\quad  \tir_0 = 0,\quad 
r_1 = a_1-b_1, \quad \tir_1= 1, 
$$
 and the following recursion formulae hold for any $k\ge 0$: 
$$
\eqalign{
r_{k+1} & = r_{k}+ (a_{k+1}-b_{k+1}-1)q_k+ q_{k-1},   
\cr
\tir_{k+1}& = \tir_k+ (a_{k+1}-b_{k+1}-1)p_k+ p_{k-1}.
\cr}
$$
It follows that 
$$
\eqalign{
r_{k+1} & = 1-q_k+ \sum_{j= 0}^{k} (a_{j+1}-b_{j+1})q_j= q_{k+1}-t_{k+1},   \quad k \ge 0, 
\cr
\tir_{k+1} & = 1-p_k+ \sum_{j= 0}^{k} (a_{j+1}-b_{j+1})p_j= p_{k+1}-\tit_{k+1},   \quad k \ge 0. 
\cr}
$$
Moreover, we have $0\le t_k < q_k$,  $0 \le \tit_k \le p_k$, $1 \le r_k \le q_k$, and $0 \le \tir_k \le p_k$, for every $k\ge 0$. 
\end{lemma}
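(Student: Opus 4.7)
The plan is to reduce everything to the definitions $t_k = \sum_{j=1}^k b_j q_{j-1}$, $\tit_k = \sum_{j=1}^k b_j p_{j-1}$, $r_k = q_k - t_k$, $\tir_k = p_k - \tit_k$, together with the standard recursion $q_{k+1} = a_{k+1} q_k + q_{k-1}$ (and similarly for $p_k$) and the usual initial values $q_{-1}=0, q_0 = 1, p_{-1}= 1, p_0 = 0, q_1 = a_1, p_1 = 1$. The initial values are then immediate: $t_0 = \tit_0 = 0$ gives $r_0 = 1$, $\tir_0 = 0$, while $t_1 = b_1 q_0 = b_1$ and $\tit_1 = b_1 p_0 = 0$ give $r_1 = a_1 - b_1$ and $\tir_1 = 1$.

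For the recursion, I would simply compute the difference
$$
r_{k+1} - r_k = (q_{k+1} - q_k) - (t_{k+1} - t_k) = (q_{k+1} - q_k) - b_{k+1} q_k,
$$
and then substitute $q_{k+1} = a_{k+1} q_k + q_{k-1}$ to obtain $r_{k+1} - r_k = (a_{k+1} - b_{k+1} - 1)q_k + q_{k-1}$. The formula for $\tir_{k+1} - \tir_k$ is completely parallel, using $p_{k+1} = a_{k+1} p_k + p_{k-1}$.

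To pass from the recursion to the closed-form expression, the key identity is
$$
\sum_{j=0}^k a_{j+1} q_j = q_{k+1} + q_k - 1,
$$
which follows by telescoping $a_{j+1} q_j = q_{j+1} - q_{j-1}$ from $j=0$ to $j=k$ and using $q_0 + q_{-1} = 1$. Substituting into $r_{k+1} = q_{k+1} - \sum_{j=0}^k b_{j+1} q_j$ yields $r_{k+1} = 1 - q_k + \sum_{j=0}^k (a_{j+1} - b_{j+1}) q_j$ as stated, and the same telescoping applied to $p_j$'s gives the companion formula for $\tir_{k+1}$.

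The main (and only non-routine) step is the final paragraph of bounds. Non-negativity of $t_k$ and $\tit_k$ is clear from $b_j \ge 0$. The upper bounds $t_k < q_k$ and $\tit_k \le p_k$ (equivalently $r_k \ge 1$ and $\tir_k \ge 0$) are precisely the admissibility constraints of the Ostrowski numeration system with base $\theta$, under the normalization invoked in the definition \eqref{defbk} (Theorem 2.1 or 4.2 of \cite{BuLa21}): the digits $b_k$ of $\rho - \theta$ are forced to satisfy $0 \le b_1 \le a_1 - 1$, $0 \le b_j \le a_j$ for $j \ge 2$, with $b_j = a_j$ implying $b_{j-1} = 0$, which inductively guarantees $\sum_{j=1}^k b_j q_{j-1} < q_k$ and $\sum_{j=1}^k b_j p_{j-1} \le p_k$. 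The upper bound $r_k \le q_k$ is then simply the non-negativity of $t_k$. I would therefore cite the Ostrowski admissibility results from \cite{BuLa21} to extract these bounds, rather than reprove them.
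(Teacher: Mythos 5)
Your proof is correct and follows essentially the same route as the paper: the same telescoping identity $\sum_{j=0}^k a_{j+1}q_j = q_{k+1}+q_k-1$ (and its analogue for the $p_j$), combined with the definitions in \eqref{defrktk}, and the Ostrowski admissibility rules applied inductively for the final inequalities. The only cosmetic difference is that you spell out the recursion step via the difference $r_{k+1}-r_k$, which the paper leaves implicit.
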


\begin{proof} 
Notice that the classical recurrence relations $q_{j+1}= a_{j+1}q_j +q_{j-1}$ and $p_{j+1}= a_{j+1}p_j +p_{j-1}$ for any $j\ge 0$, arising 
from the theory of continued fractions,  yield the formulae
\begin{equation} \label{sommeqk}
 \sum_{j=0}^k a_{j+1}q_j= a_1 + \sum_{j=1}^k q_{j+1}-q_{j-1} = q_{k+1}+ q_k-1 , \quad k\ge 0,
\end{equation}
 and
\begin{equation} \label{sommepk}
 \sum_{j=0}^k a_{j+1}p_j=  \sum_{j=1}^k p_{j+1}-p_{j-1} = p_{k+1}+ p_k-1    , \quad k\ge 0.
\end{equation}
 Then, we deduce the formulae of Lemma \ref{recrk} from \eqref{defrktk}, \eqref{sommeqk} and \eqref{sommepk}. 
Notice finally that the Ostrowski numeration rules ($0\le b_1\le a_1-1$, $0\le b_k\le a_k$, for $k\ge 1$, and $b_{k+1}= a_{k+1}$
implies $b_k=0$, for every $k \ge 1$) yield by induction on $k$ the required inequalities.
\end{proof}

Let
$$
\bc_\theta : = \s_{\theta, \theta}= \s'_{\theta,\theta}, 
$$
be the characteristic word of slope $\theta$. For $k\ge 1$, we denote by $M_k$ the prefix of length $q_k$ of $\bc_\theta$.
Set $M_0 = 0$ and $M_{-1}=1$.

\begin{proposition}\label{combsturm}
Define inductively two sequences of finite words $(T_k)_{k\ge 0}$ and $(R_k)_{k\ge 0}$ 
on $\{0, 1\}$ by letting $T_0$ be the empty word, $R_0=0$, and by  the recursion formulae
\begin{equation}\label{recTk}
T_{k+1}= M_k^{b_{k+1}}T_k 
\end{equation}
and
\begin{equation}\label{recRk}
R_{k+1}= \begin{cases} R_k M_k^{a_{k+1}-b_{k+1}-1}M_{k-1} & \text{if} \quad b_{k+1}< a_{k+1}, 
\\
R_{k-1} & \text{if}\quad  b_{k+1}=a_{k+1},
\end{cases}
\end{equation}
for any $k\ge 0$. Then, $T_k$ (resp. $R_k$) has length $t_k$ (resp. $r_k$) and contains $\tit_k$ (resp. $\tir_k$) letters $1$. Set 
$$
V_k = R_kT_k, \quad k\ge 0.
$$
The word $V_k$ has length $q_k$, contains $p_k$ letters $1$, 
and its first $q_k-1$  letters coincide with those  of $\s_{\theta,\rho}$ (or $\s'_{\theta,\rho}$). Moreover $M_k= T_kR_k$ and the sequence $(V_k)_{k\ge 0}$ satisfies the recurrence relations
$$
V_{-1}=1,\quad  V_0 = 0, \quad V_1 = V_0^{a_1-b_1-1}V_{-1}V_0^{b_1},
\quad V_{k+1}= V_k^{a_{k+1}-b_{k+1}}V_{k-1}V_k^{b_{k+1}}, \, k\ge 1.
$$
\end{proposition}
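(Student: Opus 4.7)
I would prove all the claims of the proposition by a simultaneous induction on $k$, whose skeleton mirrors the treatment of the case $\rho = 0$ carried out in \cite{BuLa21}; the novelty here is that the Ostrowski digits $(b_k)$ now correspond to the expansion of $\rho-\theta$ in place of $-\theta$.

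The length and $1$-count statements come out of routine bookkeeping. From $|M_k|=q_k$ and $|M_k|_1=p_k$ together with the recursions \eqref{recTk} and \eqref{recRk}, the relations
\[
|T_{k+1}|=b_{k+1}q_k+|T_k|,\qquad |T_{k+1}|_1=b_{k+1}p_k+|T_k|_1,
\]
and their analogues for $R_{k+1}$, match the recurrences of Lemma \ref{recrk} once the small boundary adjustments at $k=0,1$ are accommodated. The degenerate branch $b_{k+1}=a_{k+1}$ forces $b_k=0$ by the Ostrowski numeration rules, hence $T_k=T_{k-1}$ and $R_{k+1}=R_{k-1}$, in agreement with the collapse of the recurrences for $r_k,\tir_k$ under $a_{k+1}-b_{k+1}=0$.

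The factorization $M_k=T_kR_k$ is then a one-line inductive computation using the classical Sturmian recurrence $M_{k+1}=M_k^{a_{k+1}}M_{k-1}$ (for $k\ge 1$, with the appropriate boundary form at $k=0$):
\[
T_{k+1}R_{k+1}=M_k^{b_{k+1}}T_k\cdot R_kM_k^{a_{k+1}-b_{k+1}-1}M_{k-1}=M_k^{b_{k+1}}\cdot (T_kR_k)\cdot M_k^{a_{k+1}-b_{k+1}-1}M_{k-1}=M_{k+1}.
\]
The word recurrence $V_{k+1}=V_k^{a_{k+1}-b_{k+1}}V_{k-1}V_k^{b_{k+1}}$ in turn is produced by expanding $V_{k+1}=R_{k+1}T_{k+1}$, substituting $M_k=T_kR_k$ and $M_{k-1}=T_{k-1}R_{k-1}$, and reshuffling powers of $V_k=R_kT_k$ to the two ends via the elementary identities $R_k(T_kR_k)^n=V_k^nR_k$ and $(T_kR_k)^nT_k=T_kV_k^n$. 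The commutation of auxiliary factors needed to identify the central block with $V_{k-1}$ follows from a parallel induction on the structural relations between $T_k$, $R_k$, and $M_{k-1}$.

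The combinatorial heart of the proof, and what I expect to be the main obstacle, is the assertion that $V_k$ coincides with $\s_{\theta,\rho}$ (equivalently $\s'_{\theta,\rho}$, as the two words differ only at countably many positions) on its first $q_k-1$ letters. I would establish this through the rotation-coding description of $\s_{\theta,\rho}$ combined with the three-distance theorem: the partition of the unit circle induced by the points $\{j\theta\bmod 1\}_{0\le j\le q_k}$ has cells of at most three distinct lengths, and the cell containing the intercept $\rho$ is pinpointed by the Ostrowski digit $b_{k+1}$ of $\rho-\theta$. Reading \eqref{recTk}--\eqref{recRk} as substitution rules acting on the characteristic prefix $M_k$ then realizes $V_k=R_kT_k$ as the correct cyclic rotation of $M_k$ corresponding to the intercept $\rho$, up to the last letter where $\s_{\theta,\rho}$ and $\s'_{\theta,\rho}$ may differ. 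This argument is a direct transcription of the combinatorial analysis of \cite{BuLa21} carried out for $\rho=0$, with the Ostrowski digits of $\rho-\theta$ playing the role of those of $-\theta$.
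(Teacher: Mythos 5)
Your proposal is correct in substance and, for the only claim that is genuinely new in this proposition relative to \cite{BuLa21} — the count of letters $1$ in $T_k$, $R_k$ and $V_k$ — it argues exactly as the paper does: read $|T_{k+1}|_1=b_{k+1}p_k+|T_k|_1$ and its analogue for $R_{k+1}$ off \eqref{recTk}, \eqref{recRk} and the fact that $M_k$ contains $p_k$ letters $1$, and match against Lemma \ref{recrk}, treating separately the degenerate branch $b_{k+1}=a_{k+1}$ (where the Ostrowski rules force $b_k=0$, hence $T_k=T_{k-1}$ and $\tir_{k+1}=\tir_{k-1}$). Your verifications of $M_k=T_kR_k$ and of the $V$-recurrence by shuffling $R_k(T_kR_k)^n=V_k^nR_k$ and $(T_kR_k)^nT_k=T_kV_k^n$ are correct and in fact more explicit than the paper, which simply imports these identities. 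The one thing to fix is your reading of the companion paper: \cite{BuLa21} is not the case $\rho=0$ but the case $a=1$ (i.e.\ $\alpha=1$), and it already treats an arbitrary intercept $\rho$ with the Ostrowski digits of $\rho-\theta$. The words $T_k,R_k,M_k,V_k$, the recursions \eqref{recTk}--\eqref{recRk}, and — crucially — the statement that $V_k$ agrees with $\s_{\theta,\rho}$ on its first $q_k-1$ letters are all quoted verbatim from \cite[Section 3]{BuLa21}, since they concern only the word $\s_{\theta,\rho}$ and do not involve $a$ or $b$. So the ``combinatorial heart'' you single out does not need to be re-established here; your three-distance/rotation-coding sketch is a plausible route to it but would require substantial fleshing out to be a self-contained proof, whereas the paper's proof consists precisely of that citation plus the letter-$1$ bookkeeping you carried out (together with the boundary adjustment at $k=0$ that you correctly flag, which reconciles $r_1=a_1-b_1$ with $V_1=V_0^{a_1-b_1-1}V_{-1}V_0^{b_1}$).
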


\begin{proof}
Proposition \ref{combsturm} is a reformulation of the results of \cite[Section 3]{BuLa21}, with the  exception of the assertions concerning the number of letters $1$. 
These  follow from Lemma \ref{recrk}, combined with the  recursion formulae \eqref{recTk} and \eqref{recRk} established in  
\cite[Lemma 3.3]{BuLa21}, by observing that the word $M_k$ contains $p_k$ letters $1$.  
Notice that when $a_{k+1} = b_{k+1}$, we have $b_k=0$, so that 
$$
\begin{aligned}
\tir_k & =  \tir_{k-1}+(a_{k} -1)p_{k -1}+ p_{k-2}= \tir_{k-1}+ p_k -p_{k-1}, 
\\
\tir_{k+1} &= \tir_{k}+(a_{k+1}-b_{k+1}-1)p_k + p_{k-1} =\tir_k -p_k+ p_{k-1}= \tir_{k-1}.
\end{aligned}
$$
It follows that  $R_{k+1}=R_{k-1}$ contains $\tir_{k+1}= \tir_{k-1}$ letters $1$, as claimed. 
\end{proof}

\begin{definition} \label{formalint}
The sequence $(b_k)_{k \ge 1}$ is called the formal intercept of the Sturmian word 
$\s_{\theta,\rho}$ of slope $\theta$ and intercept $\rho$.
\end{definition}

The next lemma will be used in Sections \ref{cfe} and \ref{unbounded}.    

\begin{lemma}  \label{rktirk}
As $k$ tends to infinity, we have
$$
r_k \theta - \tir_k = O(1), \quad t_k \theta - \tit_k = O(1).
$$
\end{lemma}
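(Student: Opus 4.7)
The plan is to reduce both estimates to the convergence of the Ostrowski series \eqref{defbk} and the elementary fact that $q_k \theta - p_k \to 0$.

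First, I would handle $t_k\theta - \tit_k$. From the definitions \eqref{defrktk} of $t_k$ and $\tit_k$ as the partial sums $\sum_{j=1}^k b_j q_{j-1}$ and $\sum_{j=1}^k b_j p_{j-1}$, I directly get
\[
t_k \theta - \tit_k = \sum_{j=1}^{k} b_j (q_{j-1}\theta - p_{j-1}).
\]
This is precisely the $k$-th partial sum of the Ostrowski expansion \eqref{defbk}, which by hypothesis converges to $\rho - \theta$. Consequently $t_k \theta - \tit_k \longrightarrow \rho - \theta$ as $k \to \infty$, and in particular the sequence is bounded, i.e., $O(1)$.

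Next, for $r_k \theta - \tir_k$, I would use the identities $r_k = q_k - t_k$ and $\tir_k = p_k - \tit_k$ from Lemma \ref{recrk} to write
\[
r_k \theta - \tir_k = (q_k \theta - p_k) - (t_k \theta - \tit_k).
\]
The classical estimate $|q_k \theta - p_k| \le 1/q_{k+1}$ from the theory of continued fractions gives $q_k \theta - p_k \to 0$, and the previous step shows the second term converges to $\rho - \theta$. Therefore $r_k \theta - \tir_k \to \theta - \rho$, hence is $O(1)$.

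There is essentially no obstacle: once the Ostrowski series \eqref{defbk} is recognised as the relevant telescoping sum, both conclusions fall out, and moreover the implied constants are explicit limits $\rho - \theta$ and $\theta - \rho$ respectively.
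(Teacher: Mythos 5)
Your proof is correct and follows essentially the same route as the paper: both rest on recognising $t_k\theta-\tit_k=\sum_{j=0}^{k-1}b_{j+1}(q_j\theta-p_j)$ as a partial sum of the convergent Ostrowski series \eqref{defbk} and then using $r_k=q_k-t_k$, $\tir_k=p_k-\tit_k$ together with $q_k\theta-p_k\to 0$. The only (immaterial) difference is the order: the paper estimates $r_k\theta-\tir_k$ first and deduces the bound for $t_k\theta-\tit_k$, while you proceed the other way.
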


\begin{proof}
Lemma \ref{recrk} yields the formula
 $$
 \eqalign{
r_k\theta -\tir_k & = q_{k}\theta-p_{k} -\sum_{j=0}^{k-1}b_{j+1}(q_j\theta -p_j)  \cr
& = \theta -\rho +(q_{k}\theta-p_{k}) +\sum_{j\ge k}b_{j+1}(q_j\theta -p_j),
}
$$
recalling the Ostrowski expansion 
$$
\rho -\theta = \sum_{j\ge 0}b_{j+1}(q_j\theta -p_j). 
$$
This shows that $r_k \theta - \tir_k = O(1)$. Since $|q_k \theta - p_k| \le 1$, we get the 
second estimate. 
\end{proof}

\section{Continued fraction expansion }    \label{cfe} 

The main goal of this Section is to prove Theorem \ref{fraccont},  and to give further results on the convergents of $\xi$.

 For a finite word $W=w_1\dots w_\ell$ over the alphabet $\{0, 1\}$ and variables $a, b$, set     
$$
W(b,a) = \sum_{n=1}^\ell w_nb^{\ell-n}a^{\sum_{h=n+1}^\ell w_h} = 
b^\ell a^{\sum_{h=1}^\ell w_h} \sum_{n=1}^\ell w_n 
\left({1\over b}\right)^n\left( {1\over a}\right)^{\sum_{h=1}^n w_h}.
$$
Note that the exponent $\sum_{h=1}^n w_h$ counts the number of letters $1$ in the prefix of length $n$ of the word $W$. 

Now, if $\bx =x_1x_2\dots$ is an infinite word over the alphabet $\{0, 1\}$, recall that we have set
$$
\xi_{\bx}(\beta, \alpha) = \sum_{n\ge 1} x_n \beta^{n}\alpha^{\sum_{h=1}^nx_h}.
$$
When $\bx$ is  an ultimately   periodic word, $\xi_\bx(\beta, \alpha)$ is a rational function in the two variables $\beta$ and $\alpha$. Set $a= 1/\alpha$ and $b= 1/\beta$. More precisely, we have the 

\begin{lemma} \label{numerateur}
Let $Y=y_1\dots y_r$ and $Z= z_1\dots z_s$ be two finite words over $ \{0, 1\}$.  
Put $\tir = y_1+ \cdots + y_r$ and $\tis = z_1+ \cdots + z_s$. Then
$$
YZ(b,a)= b^sa^\tis Y(b,a)+ Z(b,a),
$$
where $YZ= y_1\dots y_r z_1 \dots z_s$ stands for the concatenation of the two words $Y$ and $Z$. 
Moreover, if $| b^s  a^\tis | > 1$, then  
$$
\xi_{Z^\infty}\left(\beta,\alpha\right) = { Z(b,a)\over b^s a^\tis -1}
\quad {\rm and} \quad 
\xi_{YZ^\infty}\left(\beta,\alpha \right) = { YZ(b,a)-Y(b,a)\over b^ra^\tir(b^s a^\tis -1)}, 
$$
where $Z^\infty$ stands for the concatenation of infinitely many copies of $Z$.     
\end{lemma}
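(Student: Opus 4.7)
\medskip
\noindent\textbf{Proof proposal for Lemma \ref{numerateur}.}

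The three assertions are purely formal computations on finite and periodic words, so the plan is to unfold the definition of $W(b,a)$ and $\xi_\bx(\beta,\alpha)$ and carry out the bookkeeping on the exponents that count occurrences of the letter $1$.

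For the first identity, I would start from the definition
$$
YZ(b,a) = \sum_{n=1}^{r+s} (YZ)_n \, b^{r+s-n} a^{\sum_{h=n+1}^{r+s} (YZ)_h}
$$
and split the sum at $n=r$. For $1\le n\le r$, one has $(YZ)_n = y_n$ and
$\sum_{h=n+1}^{r+s}(YZ)_h = \sum_{h=n+1}^r y_h + \tis$, so the exponent of $b$ is $s+(r-n)$ and the exponent of $a$ is $\tis+\sum_{h=n+1}^r y_h$; factoring $b^s a^{\tis}$ produces exactly $b^s a^{\tis}\,Y(b,a)$. For $r<n\le r+s$, the change of index $m=n-r$ gives directly $Z(b,a)$. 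Adding the two pieces yields the claim.

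For the second formula, I would write $Z^\infty$ as the concatenation of infinitely many copies of $Z$ and group the series $\xi_{Z^\infty}(\beta,\alpha)$ into blocks of length $s$. Position $n=ks+m$ with $1\le m\le s$ contributes $z_m\beta^{ks+m}\alpha^{k\tis+\sum_{h=1}^m z_h}$, so the whole sum factors as
$$
\xi_{Z^\infty}(\beta,\alpha) = \Bigl(\sum_{k\ge 0}(\beta^s\alpha^{\tis})^k\Bigr)\sum_{m=1}^s z_m\beta^m\alpha^{\sum_{h=1}^m z_h}.
$$
Under the assumption $|b^s a^{\tis}|>1$, i.e. $|\beta^s\alpha^{\tis}|<1$, the geometric series converges to $b^s a^{\tis}/(b^s a^{\tis}-1)$; the second factor is $Z(b,a)/(b^s a^{\tis})$ by the very definition of $Z(b,a)$, and the two cancellations give $Z(b,a)/(b^s a^{\tis}-1)$.

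For the third formula, I would decompose $\xi_{YZ^\infty}(\beta,\alpha)$ as the contribution of the prefix $Y$ plus the contribution of the tail $Z^\infty$ shifted by $r$ positions, which bears the weight $\beta^r\alpha^{\tir}$:
$$
\xi_{YZ^\infty}(\beta,\alpha) = \frac{Y(b,a)}{b^r a^{\tir}} + \frac{1}{b^r a^{\tir}}\,\xi_{Z^\infty}(\beta,\alpha).
$$
Inserting the formula just obtained for $\xi_{Z^\infty}$ and using the first identity $YZ(b,a)=b^s a^{\tis}Y(b,a)+Z(b,a)$ to rewrite the numerator $(b^s a^{\tis}-1)Y(b,a)+Z(b,a)$ as $YZ(b,a)-Y(b,a)$ gives the desired formula. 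No step should be a real obstacle; the only place where one must be careful is in verifying, when splitting $YZ$ or when shifting $Z^\infty$ behind $Y$, that the exponent counting the letters $1$ in the prefix up to a given position is correctly shifted by $\tis$ or $\tir$.
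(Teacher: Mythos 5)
Your proposal is correct and follows essentially the same route as the paper: split off the prefix $Y$, sum the geometric series over the periods of $Z$, and combine with the concatenation identity $YZ(b,a)=b^sa^{\tir[s]}Y(b,a)+Z(b,a)$ — the only (immaterial) difference being that you prove the $Z^\infty$ case first and feed it into the $YZ^\infty$ case, whereas the paper computes $\xi_{YZ^\infty}$ directly and obtains $\xi_{Z^\infty}$ by taking $Y$ empty. All the exponent bookkeeping in your sketch checks out.
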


\begin{proof}The first formula immediately follows from the definition.

By setting  $\x = YZ^\infty$ and writing $n= r+js+m$ for $n\ge r+1$, we obtain by periodicity
$$
\eqalign{
\xi_{\x}(\beta, \alpha) &= \sum_{n\ge 1} x_n\beta^n\alpha^{\sum_{h= 1}^nx_h} \cr
& = \sum_{n= 1}^r y_n\beta^n\alpha^{\sum_{h= 1}^ny_h}  
+  \sum_{j\ge 0}\sum_{m=1}^s z_m \beta^{r+ js+ m} \alpha^{\tir + j\tis + \sum_{h= 1}^m z_h}
\cr
 & =  \sum_{n= 1}^r y_n\beta^n\alpha^{\sum_{h= 1}^ny_h}  
 + { \beta^{r}\alpha^{\tir}\sum_{m=1}^s z_m\beta^m\alpha^{\sum_{h=1}^mz_h}\over 1 -\beta^s\alpha^\tis}
 \cr
 & = {Y(b,a)\over b^ra^\tir} + {Z(b,a)\over b^ra^\tir(b^sa^\tis -1)} \cr
 & =
 {Y(b,a)(b^sa^\tis-1) + Z(b,a)\over b^ra^\tir(b^sa^\tis-1)}
 ={YZ(b,a)-Y(b,a)\over b^ra^\tir(b^sa^\tis-1)}.
 \cr}
$$
When $Y$ is the empty word, we obtain the  formula $\xi_{Z^\infty}\left( \beta,\alpha\right) = { Z(b,a)\over b^s a^\tis -1}$. 
\end{proof}

We use Lemma \ref{numerateur} in order to construct  rational fractions in $a$ and $b$ 
associated to four sequences of perodic words which approach the Sturmian word $\s = \s_{\theta,\rho}$. For any $k\ge 0$, define
$$
(1)_k =   { (b-1) (R_{k+1} (b,a) -R_k  (b,a) ) \over 
b^{r_k} a^{\tir_{k}} ( b^{r_{k+1}-r_k} a^{\tir_{k+1}-\tir_k }-1)},
$$
which is associated to the word $R_k (M_k^{a_{k+1}-b_{k+1}-1} M_{k-1})^{\infty}$ whenever $a_{k+1}-b_{k+1}\ge 1$. Next, set
$$
(2)_k =   {(b-1) (R_{k+1}T_k)  (b,a) \over  b^{r_{k+1}+ t_k} a^{ \tir_{k+1} + \tit_{k} } -1},
$$
associated to the purely periodic word $(R_{k+1}T_k)^\infty$. The third approximation is 
$$
(3)_k =   {(b-1) \big((R_{k+1}M_k)  (b,a)  -R_{k+1}  (b,a) \big)\over  
b^{r_{k+1}} a^{ \tir_{k+1}} (b^{q_k} a^{p_k}-1)},
$$
associated to the word $R_{k+1}M_k^\infty$. Put finally
$$
(4)_k =   { (b-1)V_{k+1}  (b,a) \over b^{q_{k+1}}  a^{p_{k+1}} -1},     
$$
associated to the purely periodic word $V_{k+1}^\infty=(R_{k+1}T_{k+1})^\infty$. 

\medskip

We now give  an  analogue of \cite[Lemma 7.1]{BuLa21} in our framework.  We use 
the notation ${P\over Q}= c\cdot  {P' \over Q'} \oplus {P''\over Q''}$ between fractions to mean  that 
both relations  $P= cP'+P''$ and  $Q= cQ'+Q''$ hold true. 
 Similarly, $(2)_k \ominus (1)_k$ stands below for the fraction whose numerator (resp. denominator)  is  the difference between the numerators (resp. denominators) of $(2)_k$ and $(1)_k$.  It is convenient to define formally $(3)_{-1} = {b-1\over 0}$ and $(4)_{-1}= {0\over b-1}$. Then, we have the 

\begin{lemma} \label{rec(j)k}
For any $k \ge 0$, we have the following relations: 
$$
(1)_{k} = c_k \cdot (4)_{k-1} \oplus  (3)_{k-1},  
$$
$$
(2)_{k} \ominus (1)_k = d_k   \cdot (1)_{k} \oplus  (4)_{k-1},   
$$
$$
(2)_{k} = 1 \cdot \bigl( (2)_{k} \ominus (1)_{k} \bigr) \oplus   (1)_{k},
$$
$$
(3)_{k} = e_k  \cdot (2)_{k} \oplus  \bigl( (2)_{k} \ominus (1)_{k} \bigr), 
$$
$$
(4)_{k} = f_k \cdot (3)_{k} \oplus  (2)_{k}. 
$$
\end{lemma}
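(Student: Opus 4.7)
The plan is to verify each of the five relations by a direct polynomial computation in $a$ and $b$, working from the explicit expressions defining the four fractions $(1)_k,(2)_k,(3)_k,(4)_k$. Each of these has the form $(b-1)N_{(j)_k}/D_{(j)_k}$, where $N_{(j)_k}$ is built out of values $W(b,a)$ of specific words $W$ via Lemma \ref{numerateur}, and $D_{(j)_k}$ is a product of a monomial in $a,b$ with a factor $b^ma^n-1$. The tools at hand are: (i) the concatenation formula $(YZ)(b,a)=b^sa^{\tis}Y(b,a)+Z(b,a)$ of Lemma \ref{numerateur}, where $s$ and $\tis$ denote the length and the weight of $Z$; (ii) the word recursions $M_k=T_kR_k$, $V_k=R_kT_k$, $T_{k+1}=M_k^{b_{k+1}}T_k$, and $R_{k+1}=R_kM_k^{a_{k+1}-b_{k+1}-1}M_{k-1}$ of Proposition \ref{combsturm}; and (iii) the arithmetic identities $q_k=r_k+t_k$, $p_k=\tir_k+\tit_k$, $r_{k+1}-r_k=(a_{k+1}-b_{k+1}-1)q_k+q_{k-1}$ together with their $\tir,p$-analogues, from Lemma \ref{recrk}.

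For each of the five relations I would handle denominators and numerators separately. The coefficients $c_k,d_k,e_k,f_k$ are engineered precisely so that the sums telescope. For instance, $c_k$ equals, up to the monomial factor $b^{r_k+q_{k-1}}a^{\tir_k+p_{k-1}}$, exactly the quotient $((b^{q_k}a^{p_k})^{a_{k+1}-b_{k+1}-1}-1)/(b^{q_k}a^{p_k}-1)$, so that the combination $c_kD_{(4)_{k-1}}+D_{(3)_{k-1}}$ collapses, after invoking Lemma \ref{recrk}, to $b^{r_k}a^{\tir_k}(b^{r_{k+1}-r_k}a^{\tir_{k+1}-\tir_k}-1)=D_{(1)_k}$. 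The denominator identity for the second relation follows from a short direct calculation using $d_k=b^{t_k}a^{\tit_k}-1$, and similarly for the fourth and fifth relations. On the numerator side, one substitutes the appropriate word recursion into the concatenation formula: for the fifth relation, $V_{k+1}=R_{k+1}T_{k+1}$ together with $T_{k+1}=M_k^{b_{k+1}}T_k$ gives $V_{k+1}(b,a)=b^{t_{k+1}}a^{\tit_{k+1}}R_{k+1}(b,a)+T_{k+1}(b,a)$, from which $f_kN_{(3)_k}+N_{(2)_k}=N_{(4)_k}$ is extracted. The first relation uses the recursion for $R_{k+1}$; the fourth uses $M_k=T_kR_k$ expanded through Lemma \ref{numerateur}; the second is the difference of the expressions for $N_{(2)_k}$ and $N_{(1)_k}$; and the third is immediate from the very definition of $\oplus$ and $\ominus$, since $(N_{(2)_k}-N_{(1)_k})+N_{(1)_k}=N_{(2)_k}$.

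The main obstacle is purely clerical rather than conceptual. One must carefully keep track of which branch of Proposition \ref{combsturm} applies (according to whether $b_{k+1}<a_{k+1}$ or $b_{k+1}=a_{k+1}$, in which latter case $R_{k+1}=R_{k-1}$ and $c_k$ happens to be negative), correctly compute the length and weight of compound words such as $M_k^{a_{k+1}-b_{k+1}-1}M_{k-1}$ by means of Lemma \ref{recrk}, and ensure that the monomial prefactors such as $b^{r_k+q_{k-1}}a^{\tir_k+p_{k-1}}$ match on both sides of every identity. Once these exponent bookkeeping points are consistently enforced, each of the five relations reduces to a polynomial identity verifiable by inspection.
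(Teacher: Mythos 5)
Your proposal is correct and follows essentially the same route as the paper: separate verification of denominators (via the explicit formulas, $q_k=r_k+t_k$, $p_k=\tir_k+\tit_k$, and Lemma \ref{recrk}) and of numerators (via the concatenation formula of Lemma \ref{numerateur} and the word recursions of Proposition \ref{combsturm}, including the case split on whether $a_{k+1}=b_{k+1}$). The only detail you leave implicit is the base case $k=0$, where the formal conventions $(3)_{-1}=(b-1)/0$ and $(4)_{-1}=0/(b-1)$ force a separate direct check, which the paper carries out explicitly.
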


\begin{proof}
We compute 
$$
\displaylines{
(1)_0= {b-1\over b^{a_1-b_1}a-b}, \quad (2)_0  \ominus (1)_0 = {0\over b-1} ,\quad (2)_0 = {b-1 \over b^{a_1-b_1}a-1}, 
\cr
\quad (3)_0 = {(b-1)^2\over b^{a_1-b_1}a(b-1)}, \quad (4)_0 = { b^{b_1}(b-1)\over b^{a_1}a -1}.
}
$$
We have 
$$
c_0= {b^{a_1-b_1}a- b\over b-1}, \quad d_0= 0, \quad e_0= b-1,\quad f_0= {b^{b_1}-1\over b-1},
$$
so that the five above relations are verified for $k=0$. 

Assume now that $k\ge 1$. The third relation is obvious.  Let us check the four remaining  relations. The denominators of 
$(1)_k, (2)_k \ominus (1)_k, (2)_k, (3)_k, (4)_k$ are respectively
$$
\displaylines{
Q_{(1)_k}= b^{r_{k+1}}a^{\tir_{k+1}} -b^{r_k}a^{\tir_k}, \quad Q_{(2)_k \ominus (1)_k}= b^{r_{k+1}+t_k}a^{\tir_{k+1}+\tit_k}-1-(b^{r_{k+1}}a^{\tir_{k+1}} -b^{r_k}a^{\tir_k}), 
\cr
Q_{(2)_k} = b^{r_{k+1}+t_k}a^{\tir_{k+1}+\tit_k}-1,\quad Q_{(3)_k}= b^{r_{k+1}}a^{\tir_{k+1}}( b^{q_k}a^{p_k}-1), \quad  Q_{(4)_k} = b^{q_{k+1}}a^{p_{k+1}}-1.
}
$$
Using Lemma \ref{recrk}, we check that 
$$
\eqalign{
 {Q_{(1)_k}- Q_{(3)_{k-1}}\over Q_{(4)_{k-1}}}= &{ b^{r_{k+1}}a^{\tir_{k+1}} - b^{r_k}a^{\tir_k} - b^{r_k+ q_{k-1}}a^{\tir_k +p_{k-1} }+b^{r_k}a^{\tir_k}\over b^{q_k}a^{p_k}-1}
\cr
=  & {b^{r_k+ q_{k-1}}a^{\tir_k +p_{k-1} } ( (b^{q_k}a^{p_k})^{a_{k+1}-b_{k+1}-1}-1)\over b^{q_k}a^{p_k}-1} =c_k,
\cr}
$$
as required. Similarly,  we have to check that $d_k= {Q_{(2)_k}-Q_{(1)_k} -Q_{(4)_{k-1}}\over Q_{(1)_k}}$, or equivalently $d_{k}+1= {Q_{(2)_k} -Q_{(4)_{k-1}}\over Q_{(1)_k}}$. Now,
$$
{Q_{(2)_k} -Q_{(4)_{k-1}}\over Q_{(1)_k}}= { b^{r_{k+1}+t_k}a^{\tir_{k+1}+\tit_{k}} -1 -(b^{q_k}a^{p_k}-1)\over b^{r_{k+1}}a^{\tir_{k+1}} - b^{r_k}a^{\tir_k}} = b^{t_k}a^{\tit_k} = d_k+1,
$$
since $q_k= r_k+t_k$ and $p_k= \tir_k+\tit_k$. For the fourth relation, we have to show that $e_k= {Q_{(3)_k}-Q_{(2)_k} + Q_{(1)_k}\over Q_{(2)_k}}$, or equivalently that  $e_k+1= {Q_{(3)_k} + Q_{(1)_k}\over Q_{(2)_k}}$. But
$$
{Q_{(3)_k} + Q_{(1)_k}\over Q_{(2)_k}}= { b^{r_{k+1}+q_k}a^{\tir_{k+1}+ p_k} - b^{r_k}a^{\tir_k} \over b^{r_{k+1} +t_k}a^{\tir_{k+1}+ \tit_k} -1} = b^{r_k}a^{\tir_k} = e_k+1,
$$
writing again $q_k= r_k+t_k$ and $p_k= \tir_k+\tit_k$.  For the fifth relation, we compute
\begin{align*}
{Q_{(4)_k} -Q_{(2)_k}\over Q_{(3)_k}} 
& ={ b^{q_{k+1}}a^{p_{k+1}}- b^{r_{k+1}+t_k}a^{\tir_{k+1}+\tit_k}\over b^{r_{k+1}}a^{\tir_{k+1}}(b^{q_k}a^{p_k}-1)} \\
& = { b^{t_k + b_{k+1}q_k} a^{\tit_k+ b_{k+1}p_k} -b^{t_k}a^{\tit_k}\over b^{q_k}a^{p_k}-1} =f_k,
\end{align*}
since 
$$
q_{k+1}= r_{k+1}+t_{k+1}= r_{k+1}+ t_k+b_{k+1}q_k
$$
and 
$$
p_{k+1}= \tir_{k+1}+\tit_{k+1}= \tir_{k+1}+ \tit_k+b_{k+1}p_k.
$$
It remains to deal with the numerators. For the first relation, we have to show that
$$
c_kV_k(b,a)+ R_kM_{k-1}(b,a) -R_k(b,a) = R_{k+1}(b,a)-R_k(b,a).
$$
Assume first that $a_{k+1}-b_{k+1}\ge 1$. Using  Lemma \ref{numerateur}, Proposition \ref{combsturm} and noting that $(R_{k} T_{k})^{a_{k+1} - b_{k+1}  - 1} R_{k} M_{k-1}= R_{k+1}$ by \eqref{recRk}, we compute
$$
\eqalign{ 
c_kV_k(b,a)& = V_k(b,a) \times b^{r_{k} + q_{k-1}} a^{\tir_k+ p_k}
{(b^{q_k}a^{p_k})^{a_{k+1} - b_{k+1} - 1  } - 1 \over b^{q_{k}}a^{p_k} - 1} \cr
&= R_k T_k(b,a) \times b^{r_k+q_{k-1}}a^{\tir_k+p_{k-1}}(1+ b^{q_k}a^{p_k}+ \cdots + (b^{q_k}a^{p_k})^{a_{k+1} - b_{k+1} - 2 }  )
 \cr
& =  b^{r_k+q_{k-1}}a^{\tir_k+p_{k-1}}(R_{k} T_{k})^{a_{k+1} - b_{k+1} - 1} (b,a) \cr
& = (R_{k} T_{k})^{a_{k+1} - b_{k+1}  - 1} R_{k} M_{k-1}(b,a) - R_{k} M_{k-1}(b,a)  \cr
& = R_{k+1}(b,a)  - R_{k} M_{k-1}(b,a), \cr
}
$$
as required.
Assume finally that $a_{k+1} = b_{k+1}$. Then, $c_k = - b^{r_{k-1}}a^{\tir_{k-1}}$ and we  have 
$$
\eqalign{
V_k(b,a) \times ( - b^{r_{k-1}}a^{\tir_{k-1}}) & = - V_k R_{k-1}(b,a) + R_{k-1}(b,a) \cr
& = - R_k T_k R_{k-1}(b,a) + R_{k-1}(b,a) \cr
& = - R_k M_{k-1}(b,a) + R_{k+1}(b,a) , \cr
}
$$
since $T_k=T_{k-1}$ in that case, thanks to \eqref{recTk}. 

The second relation for the numerators reads 
$$
(d_k+1)(R_{k+1}(b,a)-R_k(b,a)) = R_{k+1}T_k(b,a) -V_k(b,a).
$$
To that purpose, write
$$
\eqalign{
 b^{t_{k}} a^{\tit_k } ( R_{k+1} (b,a) &  -R_k  (b,a) ) \cr 
& = b^{t_{k}} a^{\tit_k }  R_{k+1} (b,a) + T_k (b,a) - 
\bigl( b^{t_{k}} a^{\tit_k }  R_{k} (b,a) + T_k (b,a) \bigr) \cr
& = R_{k+1} T_k (b,a) -R_k T_k (b,a) = R_{k+1} T_k (b,a) - V_k (b,a). \cr
}
$$

The third relation for the numerators is obvious, while the fourth writes
$$
\eqalign{
(e_k+1)R_{k+1}T_k(b,a) & = R_{k+1}M_k(b,a)-R_{k+1}(b,a))+ (R_{k+1}(b,a)-R_k(b,a))
\cr &= R_{k+1}M_k(b,a)-R_k(b,a),
\cr}
$$
which follows from the equalities
$$
b^{r_k}a^{\tir_k}R_{k+1}T_k(b,a)+R_k(b,a) = R_{k+1}T_kR_k(b,a)= R_{k+1}M_k(b,a),
$$
by Lemma \ref{numerateur} (with $Z=R_k$ and $Y=R_{k+1}T_k$) and Proposition \ref{combsturm}. The fifth relation writes
$$
f_k\bigl((R_{k+1}M_k)(b,a) - R_{k+1}(b,a)\bigr) = V_{k+1}(b,a)- (R_{k+1}T_k)(b,a).
$$
Notice that 
$$
V_{k+1}= R_{k+1}T_{k+1} = R_{k+1}M_k^{b_{k+1}}T_k, 
$$
so that
$$
\eqalign{
V_{k+1}(b,a) - & (R_{k+1}T_k)(b,a) \cr
& = \big(V_{k+1}(b,a) -T_k(b,a)\big)- \big((R_{k+1}T_k)(b,a) -T_k(b,a)\big)
\cr
& = b^{t_k}a^{\tit_k} \bigl( (R_{k+1}M_k^{b_{k+1}})(b,a) -R_{k+1}(b,a) \bigr),
\cr}
$$
thanks to Lemma \ref{numerateur}. Now, we can write  
$$
\eqalign{
(R_{k+1}M_k^{b_{k+1}})(b,a) & -R_{k+1}(b,a) \cr
&= \sum_{j=0}^{b_{k+1}-1}
\bigl( (R_{k+1}M_k^{j+1})(b,a) -(R_{k+1}M_k^j)(b,a) \bigr)
\cr
& = \biggl( \, \sum_{j=0}^{b_{k+1}-1} b^{j q_k}a^{j p_k}\biggr) \bigl( (R_{k+1}M_k)(b,a) -R_{k+1}(b,a)\bigr),
\cr}
$$
by factoring $M_k^j$ on the right and applying again Lemma \ref{numerateur}.  
The fifth relation immediately follows and Lemma \ref{rec(j)k} has been fully checked. 
\end{proof}

We have now all the tools to prove Theorems \ref{fraccont} and \ref{expirrat} . 

\begin{proof}[Proof of Theorem \ref{fraccont}] 
At this stage, the proof of Theorem \ref{fraccont} follows the argumentation of Section 7 in \cite{BuLa21}. We briefly take it again. 

Let us number $\alpha_1, \alpha_2,\dots$ the cyclic sequence
$$
c_0,d_0,1,e_0,f_0,c_1,d_1,1,e_1,f_1,c_2,d_2,1,e_2,f_2,\dots
$$
and define two sequences $(P_j)_{j\ge -1}$ and $(Q_j)_{j\ge -1}$ by the recurrence relations
$$
P_{-1}= b-1, \quad P_0=0, \quad P_j= \alpha_j P_{j-1} + P_{j-2} , \quad j \ge 1,
$$
$$
Q_{-1}= 0, \quad Q_0=b-1, \quad Q_j= \alpha_j Q_{j-1} + Q_{j-2} , \quad j \ge 1.
$$
Equivalently, we have the matrices equalities
$$
\begin{pmatrix}
Q_{j}& Q_{j-1}\cr P_{j} & P_{j-1}
\end{pmatrix}
=
\begin{pmatrix}
b-1& 0\cr 0 & b-1
\end{pmatrix}
\begin{pmatrix}
\alpha_1& 1\cr 1 & 0
\end{pmatrix}
\dots
\begin{pmatrix}
\alpha_j & 1\cr 1 & 0
\end{pmatrix}, 
\quad j\ge 1.
$$
Lemma \ref{rec(j)k} tells us that the sequence $(P_j)_{j\ge 1}$ (resp. $(Q_j)_{j\ge 1}$) coincide with the sequence of numerators (resp. denominators) of 
$$
(1)_0, (2)_0  \ominus (1)_0, (2)_0, (3)_0, (4)_0, (1)_1, (2)_1  \ominus (1)_1, (2)_1, (3)_1, (4)_1,\dots .      
$$
Now, we reduce by associativity the  (formal) infinite product
$$
\begin{pmatrix}
\alpha_1& 1\cr 1 & 0
\end{pmatrix}
\begin{pmatrix}
\alpha_2 & 1\cr 1 & 0
\end{pmatrix}
\begin{pmatrix}
\alpha_3 & 1\cr 1 & 0
\end{pmatrix}
\cdots ,
$$ 
thanks to the processes $(i)$ and $(ii)$ of Theorem \ref{fraccont}. For shortness, write 
$$
E(\alpha)= \begin{pmatrix}
\alpha & 1\cr 1 & 0
\end{pmatrix}.
$$
$(i)$ If $a_{k+2}=b_{k+2}$, we reduce the product of the nine consecutive factors : 
 $$
 \begin{aligned}
 &   \quad E(c_k)E(d_k)E(1)E(e_k)E(f_k) E(c_{k+1})E(d_{k+1})E(1)E(e_{k+1}) = 
 \\
& E(c_k)E(d_k)E(1)E(e_k)E(0) E(-e_k-1)E(d_{k})E(1)E(e_{k+1})
 =  E(c_k+e_{k+1}+1).
 \end{aligned}
 $$
If $c_k= \alpha_l$, we thus have
$$
 \begin{pmatrix}
Q_{l+8}& Q_{l+7}\cr P_{l+8} & P_{l+7}
\end{pmatrix}
=
 \begin{pmatrix}
Q_{l-1}& Q_{l-2}\cr P_{l-1} & P_{l-2}
\end{pmatrix}
\begin{pmatrix}
c_k+e_{k+1}+1 & 1\cr 1 & 0
\end{pmatrix},
$$
and we jump from ${P_{l-1}\over Q_{l-1}}= (4)_{k-1}$ to ${P_{l+8}\over Q_{l+8}}= (3)_{k+1}$, thanks to the elementary matrix $E(c_k+e_{k+1}+1)$. 

\noindent $(ii)$  Reduction $(i)$ enables us to transform  the infinite product $E(\alpha_1)E(\alpha_2)\cdots $ into the product 
$$
E(\alpha_1)E(\alpha_2)\cdots = E(\alpha'_1)E(\alpha'_2)\cdots,
$$
where $\alpha'_1, \alpha'_2, \dots$ are non-negative. We may encounter some zeroes (these precisely occur in the six cases displayed after Theorem \ref{fraccont}). If $\alpha'_l=0$ say,  we replace $\alpha'_{l-1},0,\alpha'_{l+1}$  by $\alpha'_{l-1} + \alpha'_{l+1}$, thanks to the matrices equality
$$
E( \alpha'_{l-1})E(0)E(\alpha'_{l+1})= E(\alpha'_{l-1}+ \alpha'_{l+1}).
$$
 
 We finally end up with a product
 $$
 E(\alpha_1)E(\alpha_2)\cdots = E(A_1)E(A_2)\cdots
$$
 where $A_1,A_2, \dots$ are positive. 
Moreover, every convergent  $[0;A_1,\dots, A_n]$  equals one of the five fractions 
$(1)_k$, $(2)_k \ominus (1)_k, (2)_k, (3)_k, (4)_k$,     
and  infinitely many of these convergents are of the form  $(j)_k$ for some $1\le j \le 4$. Now, by Lemma \ref{numerateur}, $(j)_k= (b-1)\xi_{R_{k+1}\dots}(1/b,1/a)$ for some ultimately periodic word $R_{k+1}\dots$ sharing a large prefix with $\s_{\theta, \rho}$ (or $\s'_{\theta,\rho}$). It follows that
$$
\xi = [0, A_1, A_2, \dots].
$$

Since $a$ and $b$ are integers, observe that the numbers 
$$
c_0, d_0, 1, e_0, f_0, c_1,d_1, 1, e_2, f_2, \dots
$$
 are integers, except possibly 
$c_0 = {b^{a_1-b_1}a-b\over b-1}$. 
But $c_0$ is a non-negative  integer if we assume that $a$ is congruent to $1$ modulo $b-1$. 
Then,  the $A_j $ are positive integers and $A_1, A_2,\dots$ is the sequence of  partial quotients of $\xi$. Theorem \ref{fraccont} is proved.
\end{proof}

\begin{proof}[Proof of Theorem \ref{expirrat}. ] 
Assume first that $a$ is congruent to $1$ modulo $b-1$. 
Let $\xi$ denote one of the numbers $(b-1)\xi_{\s_{\theta,\rho}}(1/b, 1/a)$ or $(b-1)\xi_{\s'_{\theta,\rho}}(1/b, 1/a)$ and let $\xi'$ be the corresponding number with $a=1$. We denote by $(P_j/Q_j)_{j\ge 1}$ (resp. 
$(P'_j/Q'_j)_{j\ge 1}$) the sequence of convergents to $\xi$ (resp. $\xi'$). We claim that 
\begin{equation} \label{Qj}
Q'_j \gg \ll Q_j^{\varphi}, \quad 
\hbox{where} \ \   \varphi = {\log b \over \log b a^\theta}, 
\end{equation}
as $j$ tends to infinity. Indeed, it follows from the proof of Theorem \ref{fraccont} that each convergent $P_j/Q_j$ coincides with  one of the fractions
$$
(1)_k, (2)_k \ominus (1)_k, (2)_k, (3)_k, (4)_k,
$$
for some $k$. Moreover, if $P_j/Q_j= (3)_k$ say, then $P'_j/Q'_j= (3)'_k$,
where $(3)'_k$ stands for the corresponding fraction with $a=1$, observing that the reductions $(i)$ and $(ii)$ occurring in Theorem \ref{fraccont} are independent of $a$ and $b$ (they depend only on the two sequences $(a_k)_{k\ge 1}$ and $(b_k)_{k\ge 1}$). It follows that 
$$
(ba^\theta)^{r_{k+1}+q_k} \ll Q_j = { b^{r_{k+1}}a^{\tir_{k+1}}( b^{q_k}a^{p_k}-1)\over b-1} \ll (ba^\theta)^{r_{k+1}+q_k},
$$
by using Lemma \ref{rktirk}, while
$$
b^{r_{k+1}+q_k} \ll Q'_j = { b^{r_{k+1}}( b^{q_k}-1)\over b-1} \le b^{r_{k+1}+q_k}.
$$
Then, \eqref{Qj} holds true in this case. The other cases are similar.

Now, the theory of continued fractions and \eqref{Qj} yield that
the  irrationality exponents $\mu(\xi)$ and $\mu(\xi')$ of $\xi$ and $\xi'$ are equal, since they are given by the formulae
$$
\mu(\xi) = 1 + \limsup_{j\to +\infty} {\log Q_{j+1}\over \log Q_j} = 
1 + \limsup_{j\to +\infty} {\log Q'_{j+1}\over \log Q'_j} = \mu(\xi').
$$
As already mentioned, Theorem \ref{expirrat} holds true for $\xi'$, and thus for $\xi$,  by \cite[Theorem 2.4]{BuLa21}.

If $a$ is not assumed to be congruent to $1$ modulo $b-1$, then $A_1$ is a positive rational number whose denominator divides $b-1$, while $A_2, A_3, \dots$ are positive integers. Define
$$
{P_j\over Q_j} = [ 0; A_1, \dots, A_j], \quad j\ge 1,
$$
or equivalently
$$
\begin{pmatrix}
Q_{j}& Q_{j-1}\cr P_{j} & P_{j-1}
\end{pmatrix}
=
\begin{pmatrix}
A_1& 1\cr 1 & 0
\end{pmatrix}
\cdots
\begin{pmatrix}
A_j & 1\cr 1 & 0
\end{pmatrix}, 
\quad j\ge 1.
$$
Then, the $P_j$ are integers and the $Q_j$ are rational numbers with denominators dividing $b-1$. The sequence $(P_j/Q_j)_{j\ge 1}$ does not necessarily coincide with the sequence of convergents to $\xi$. However,  the inequalities 
$$
\left\vert \xi - {P_j\over Q_j} \right\vert \le {1\over Q_jQ_{j+1}}, \quad j\ge 1,
$$
remain true, and the above argumentation remains valid.
\end{proof}

\section{Functional equations and expansions of Hecke-Mahler series}
\label{functeq}
 
 We give in this Section  analytical formulae involving Hecke-Mahler series which will reveal 
 to be useful for the proof of Theorem \ref{maintrans}. 
 

Let us begin with a  relation between the fractions $(3)_k$ and $(4)_k$.
Here and unless otherwise stated, 
we consider $\alpha$ and $\beta$ as variables and work in the ring of power series $\Q [[\alpha, \beta]]$. 
We have

\begin{equation} \label{difference}
(3)_k = (4)_{k-1} + (-1)^k \left({1\over \beta}-1\right)^2{ \beta^{r_{k+1}+q_k}\alpha^{\tir_{k+1}+p_k} \over 1-  \beta^{q_k}\alpha^{p_k}}.
\end{equation}

We stress that here and in this section the $+$ sign between fractions denotes the usual addition and not    
the Farey addition, denoted by $\oplus$ in the previous section.     

The equality \eqref{difference} is proved in \cite[Lemma 2.2]{BKLN}, but only with the case $\alpha=1$. The general case is similar.

For $k\ge 0$,  set $\bfv_k = V_k^\infty$, 
$$
\sigma_k = \sum_{n=1}^{q_k} s_n\beta^n\alpha^{\sum_{h=1}^n s_h}, 
\quad {\rm and} \quad \gamma_k = \beta^{q_k}\alpha^{p_k},
$$
so that 
$$
\xi_{\bfv_k}(\beta,\alpha)= {\sigma_k \over 1-\gamma_k} ={ (4)_{k-1} \over {1\over \beta}-1},
$$
 for $k\ge 1$.  We have
$$
\sigma_0= 0, \quad \sigma_1 = \beta^{a_1-b_1}\alpha , \quad
\gamma_0= \beta, \quad \gamma_1= \beta^{a_1}\alpha.
$$

The recursion relations between the words $V_k$ yield the

\begin{lemma}\label{recsigmak}
For any $k\ge 1$, the numerators $\sigma_k$  satisfy the linear recurrence relation
$$
\sigma_{k+1} = {1-\gamma_{k+1}-\gamma_k^{a_{k+1}-b_{k+1}}(1-\gamma_{k-1})\over 1-\gamma_{k}} \sigma_k + \gamma_k^{a_{k+1}-b_{k+1}} \sigma_{k-1}.
$$
It follows that
$$
\eqalign{
(4)_{k} -(4)_{k-1}  = &(-1)^k {\alpha\beta\left({1\over\beta}-1\right)^2 \prod_{h=0}^k\gamma_h^{a_{h+1}-b_{h+1}}\over (1-\gamma_{k+1})(1-\gamma_k)}
\cr
=& (-1)^k { \alpha\beta\left({1\over\beta}-1\right)^2 \beta^{ \sum_{h=0}^k (a_{h+1}-b_{h+1} )q_h}\alpha^{ \sum_{h=0}^k(a_{h+1}-b_{h+1})p_h}\over (1-\beta^{q_{k+1}}\alpha^{p_{k+1}})(1-\beta^{q_k}\alpha^{p_k})}
\cr
=& (-1)^k \left( {1\over \beta}-1\right)^2 { \beta^{r_{k+1}+q_k}\alpha^{\tir_{k+1}+p_k} \over (1-\beta^{q_{k+1}}\alpha^{p_{k+1}}) (1-  \beta^{q_k}\alpha^{p_k})}.
}
$$
and that 
$$
\begin{aligned}
 &  \quad  \qquad (3)_{k+1}  -(3)_k = 
\\
& 
   (-1)^k \left({1\over \beta}-1\right)^2 
{  \beta^{r_{k+1}+q_{k+1}+ q_{k}}\alpha^{\tir_{k+1}+p_{k+1}+ p_{k}}
 - \beta^{r_{k+2}+q_{k+1}}\alpha^{\tir_{k+2}+p_{k+1}}(1-\beta^{q_k}\alpha^{p_k})  \over  (1- \beta^{q_k}\alpha^{p_k})( 1- \beta^{q_{k+1}}\alpha^{p_{k+1}})}.
\end{aligned}
$$

\end{lemma}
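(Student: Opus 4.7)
The plan is to derive all three assertions from the combinatorial recursion $V_{k+1} = V_k^{a_{k+1}-b_{k+1}} V_{k-1} V_k^{b_{k+1}}$ given by Proposition \ref{combsturm}. First I would read $\sigma_{k+1} = \xi_{V_{k+1}}(\beta,\alpha)$ by applying the concatenation rule of Lemma \ref{numerateur} (in its multiplicative form $\xi_{YZ} = \xi_Y + \beta^{|Y|}\alpha^{\tilde y}\xi_Z$) to the decomposition of $V_{k+1}$. Noting that $V_k$ has length $q_k$ and $p_k$ letters $1$, the weight introduced by one copy of $V_k$ is exactly $\gamma_k$, so geometric summation gives
$$
\sigma_{k+1} = \frac{1-\gamma_k^{a_{k+1}-b_{k+1}}}{1-\gamma_k}\sigma_k \;+\; \gamma_k^{a_{k+1}-b_{k+1}}\sigma_{k-1} \;+\; \gamma_k^{a_{k+1}-b_{k+1}}\gamma_{k-1}\,\frac{1-\gamma_k^{b_{k+1}}}{1-\gamma_k}\sigma_k.
$$
Using the identity $\gamma_k^{a_{k+1}}\gamma_{k-1}=\gamma_{k+1}$, which is a consequence of $q_{k+1}=a_{k+1}q_k+q_{k-1}$ and $p_{k+1}=a_{k+1}p_k+p_{k-1}$, one collects the coefficient of $\sigma_k$ and recovers the stated linear recurrence.

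For the second assertion, I would start from $(4)_k=(b-1)\sigma_{k+1}/(1-\gamma_{k+1})$ (rewriting the definition via $V_{k+1}(b,a) = b^{q_{k+1}}a^{p_{k+1}}\sigma_{k+1}$) and compute $\Delta_k := (4)_k - (4)_{k-1} = (b-1)\bigl[\sigma_{k+1}(1-\gamma_k) - \sigma_k(1-\gamma_{k+1})\bigr]/[(1-\gamma_{k+1})(1-\gamma_k)]$. Substituting the recurrence in the numerator kills the $\sigma_k$ terms and produces
$$
\sigma_{k+1}(1-\gamma_k) - \sigma_k(1-\gamma_{k+1}) = -\gamma_k^{a_{k+1}-b_{k+1}}\bigl[(1-\gamma_{k-1})\sigma_k - (1-\gamma_k)\sigma_{k-1}\bigr],
$$
so that $\Delta_k = -\gamma_k^{a_{k+1}-b_{k+1}}\,\dfrac{1-\gamma_{k-1}}{1-\gamma_{k+1}}\,\Delta_{k-1}$. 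An immediate induction, iterating this telescoping relation back to the base case $k=0$ (where $\sigma_0=0$, $\sigma_1=\beta^{a_1-b_1}\alpha$ give $\Delta_0 = (4)_0 = (b-1)\beta^{a_1-b_1}\alpha/(1-\gamma_1)$, which matches the closed formula with $(1/\beta-1)^2\gamma_0^{a_1-b_1} = (1-\beta)^2\beta^{a_1-b_1-2}$ after one line of algebra), yields the first displayed expression for $\Delta_k$. The second and third forms are obtained by expanding $\prod_{h=0}^k\gamma_h^{a_{h+1}-b_{h+1}}$ in the definitions and then invoking Lemma \ref{recrk} to replace $\sum_{h=0}^k(a_{h+1}-b_{h+1})q_h$ by $r_{k+1}+q_k-1$ and likewise for the exponents of $\alpha$.

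The third identity follows by substituting \eqref{difference} at indices $k+1$ and $k$, replacing $(4)_k-(4)_{k-1}$ by the formula just obtained, and reducing the three resulting rational expressions to the common denominator $(1-\gamma_k)(1-\gamma_{k+1})$. Writing for brevity $A=\beta^{r_{k+1}+q_k}\alpha^{\tir_{k+1}+p_k}$ and $B=\beta^{r_{k+2}+q_{k+1}}\alpha^{\tir_{k+2}+p_{k+1}}$, the numerator simplifies to $A - A(1-\gamma_{k+1}) - B(1-\gamma_k) = A\gamma_{k+1} - B(1-\gamma_k)$, which is exactly what is claimed.

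There is no real conceptual obstacle: the only non-mechanical step is identifying the right way to package the recursion for $\sigma_k$ so that the difference $(4)_k-(4)_{k-1}$ telescopes cleanly. The main risk is bookkeeping of signs and exponents, and making sure the Ostrowski identities from Lemma \ref{recrk} are applied in the correct direction when passing from the intermediate form in $\gamma_h$ to the final form in $r_{k+1},\tir_{k+1},q_k,p_k$.
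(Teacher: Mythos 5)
Your proposal is correct and follows essentially the same route as the paper: the recurrence for $\sigma_{k+1}$ is read off the word equation $V_{k+1}=V_k^{a_{k+1}-b_{k+1}}V_{k-1}V_k^{b_{k+1}}$ via geometric summation and the identity $\gamma_k^{a_{k+1}}\gamma_{k-1}=\gamma_{k+1}$, the difference $(4)_k-(4)_{k-1}$ is telescoped through the relation $(1-\gamma_k)\sigma_{k+1}-(1-\gamma_{k+1})\sigma_k=-\gamma_k^{a_{k+1}-b_{k+1}}\bigl((1-\gamma_{k-1})\sigma_k-(1-\gamma_k)\sigma_{k-1}\bigr)$ down to $\sigma_0=0$, $\sigma_1=\gamma_0^{a_1-b_1}\alpha$, and the third identity comes from \eqref{difference} exactly as in the paper. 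The only cosmetic difference is that you obtain the telescoping relation by direct substitution of the recurrence, whereas the paper notes that the denominators $1-\gamma_k$ satisfy the same recurrence; these are equivalent.
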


\begin{proof}
Note that $V_k $ has length $q_k$ and contains $p_k$ letters $1$. Then, we deduce from the word equation 
$$
V_{k+1}= V_k^{a_{k+1}-b_{k+1}}V_{k-1}V_k^{b_{k+1}}
$$
the equality
$$
\eqalign{
\sigma_{k+1}= &(1+ \gamma_k+ \cdots  + \gamma_k^{a_{k+1}-b_{k+1}-1})\sigma_k+ \gamma_k^{a_{k+1}-b_{k+1}}\sigma_{k-1}  \cr
& \hskip 10mm 
+ \gamma_k^{a_{k+1}-b_{k+1}}\gamma_{k-1}(1+ \cdots + \gamma_k^{b_{k+1}-1})\sigma_k
\cr
=& { 1- \gamma_k^{a_{k+1}-b_{k+1}}+ \gamma_k^{a_{k+1}-b_{k+1}}\gamma_{k-1} (1-\gamma_k^{b_{k+1}})\over 1-\gamma_k}\sigma_k + 
\gamma_k^{a_{k+1}-b_{k+1}}\sigma_{k-1}
\cr
=& {1-\gamma_{k+1}-\gamma_k^{a_{k+1}-b_{k+1}}(1-\gamma_{k-1})\over 1-\gamma_{k}} \sigma_k + \gamma_k^{a_{k+1}-b_{k+1}} \sigma_{k-1},
}
$$
since $\gamma_k^{a_{k+1}}\gamma_{k-1}= \gamma_{k+1}$. Observe now that the denominators $1-\gamma_k$ satisfy obviously the same linear relation
$$
1-\gamma_{k+1} = {1-\gamma_{k+1}-\gamma_k^{a_{k+1}-b_{k+1}}(1-\gamma_{k-1})\over 1-\gamma_{k}} (1-\gamma_k) + \gamma_k^{a_{k+1}-b_{k+1}}(1- \gamma_{k-1}).
$$
It follows that
$$
(1-\gamma_k)\sigma_{k+1}- (1-\gamma_{k+1})\sigma_k= -\gamma_k^{a_{k+1}-b_{k+1}}( (1-\gamma_{k-1})\sigma_k - (1-\gamma_k)\sigma_{k-1}).
$$
Going down inductively  to $k=1$, we obtain
$$
\eqalign{
(1-\gamma_k)\sigma_{k+1}- & (1-\gamma_{k+1})\sigma_k
=  (-1)^k ((1-\gamma_0)\sigma_1- (1-\gamma_1)\sigma_0) \prod_{h=1}^k\gamma_h^{a_{h+1}-b_{h+1}}  
\cr
= & (-1)^k(1-\beta)\alpha\prod_{h=0}^k\gamma_h^{a_{h+1}-b_{h+1}}
= (-1)^k \left( {1\over \beta}-1\right) \beta^{r_{k+1}+q_k}\alpha^{\tir_{k+1}+p_k}, 
}
$$
by using Lemma \ref{recrk} and noting that  $\sigma_0= 0$ and $\sigma_1 = \beta^{a_1-b_1}\alpha= \gamma_0^{a_1-b_1}\alpha$.
The formulae for $(4)_{k} -(4)_{k-1}$ immediately follow. For  the difference $(3)_{k+1}-(3)_k$, we use moreover  the equality \eqref{difference} to obtain
$$
\begin{aligned}
 & (3)_{k+1}  -(3)_k = 
\\ 
&  (4)_k -(4)_{k-1} + (-1)^k \left({1\over \beta}-1\right)^2\left( - { \beta^{r_{k+2}+q_{k+1}}\alpha^{\tir_{k+2}+p_{k+1}} \over 1-  \beta^{q_{k+1}}\alpha^{p_{k+1}}} - { \beta^{r_{k+1}+q_k}\alpha^{\tir_{k+1}+p_k} \over 1-  \beta^{q_k}\alpha^{p_k}} \right) = 
\\
& 
  (-1)^k \left({1\over \beta}-1\right)^2 \Biggl( { \beta^{r_{k+1} +q_k} \alpha^{\tir_{k+1} +p_{k+1}} \over (1- \beta^{q_k}\alpha^{p_k})( 1- \beta^{q_{k+1}}\alpha^{p_{k+1}}) }
\\
& \quad - {   \beta^{r_{k+2}+q_{k+1}}\alpha^{\tir_{k+2}+p_{k+1}}(1-\beta^{q_k}\alpha^{p_k}) +  \beta^{r_{k+1}+q_{k}}\alpha^{\tir_{k+1}+p_{k}}(1-\beta^{q_{k+1}}\alpha^{p_{k+1}}) \over (1- \beta^{q_k}\alpha^{p_k})( 1- \beta^{q_{k+1}}\alpha^{p_{k+1}})}
\Biggr) =
\\
& 
  (-1)^k \left({1\over \beta}-1\right)^2 
{  \beta^{r_{k+1}+q_{k+1}+ q_{k}}\alpha^{\tir_{k+1}+p_{k+1}+ p_{k}}
 - \beta^{r_{k+2}+q_{k+1}}\alpha^{\tir_{k+2}+p_{k+1}}(1-\beta^{q_k}\alpha^{p_k})  \over  (1- \beta^{q_k}\alpha^{p_k})( 1- \beta^{q_{k+1}}\alpha^{p_{k+1}})}.
\end{aligned}
$$
The proof is complete. 
\end{proof}

\begin{corollary}   \label{reprxi} 
We have the following formulae for the Hecke-Mahler series
$$
\eqalign{
\xi_{\bfs_{\theta, \rho}} (\beta,\alpha) =& (1-\beta)\alpha 
\sum_{k\ge 0}  (-1)^k{\prod_{h=0}^k \gamma_h^{a_{h+1}-b_{h+1}}\over (1-\gamma_{k+1})(1-\gamma_k)}
\cr
= & (1-\beta)\alpha \sum_{k\ge 0}  (-1)^k{\beta^{ \sum_{h=0}^k (a_{h+1}-b_{h+1} )q_h}\alpha^{ \sum_{h=0}^k(a_{h+1}-b_{h+1})p_h}\over (1-\beta^{q_{k+1}}\alpha^{p_{k+1}})(1-\beta^{q_k}\alpha^{p_k})} \cr
= & \frac{1-\beta}{\beta} \sum_{k\ge 0}  (-1)^k{\beta^{r_{k+1} + q_k}\alpha^{ \tir_{k+1} + p_k}\over (1-\beta^{q_{k+1}}\alpha^{p_{k+1}})(1-\beta^{q_k}\alpha^{p_k})}
}
$$
and
$$
\eqalign{
& \xi_{\bfs_{\theta, \rho}} (\beta,\alpha) =   \frac{1-\beta}{\beta} \, \biggl( \, 
{\beta^{r_{1} + 1}\alpha  - \beta^{r_{2} + q_1}\alpha^{ \tir_{2} + p_1} (1 - \beta^{q_0}\alpha^{p_0} )
\over (1-\beta^{q_{1}} \alpha^{p_{1}})(1-\beta^{q_0}\alpha^{p_0})}  \cr 
&  \, \,   +  \sum_{k\ge 1}  (-1)^{k}  { {\beta^{r_{k+1} + q_{k+1} + q_{k}}\alpha^{ \tir_{k+1} + p_{k+1} + p_{k}} 
-  \beta^{r_{k+2} + q_{k+1} }\alpha^{ \tir_{k+2} + p_{k+1}}   
( 1 - \beta^{q_{k}}\alpha^{ p_{k}} ) }
\over (1-\beta^{q_{k+1}}\alpha^{p_{k+1}})( 1-\beta^{q_{k}} \alpha^{p_{k}}) } \biggr). 
\cr} 
$$
\end{corollary}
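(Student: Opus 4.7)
The plan is to telescope the sequences $\bigl((4)_k\bigr)_{k \ge -1}$ and $\bigl((3)_k\bigr)_{k \ge 0}$ constructed in Section \ref{cfe} and to pass to a limit in the formal power series ring $\Q[[\alpha,\beta]]$, in which Section \ref{functeq} is working. The foundational observation is that, by Proposition \ref{combsturm}, the first $q_k-1$ letters of the periodic word $\bfv_k = V_k^\infty$ coincide with those of $\bfs_{\theta,\rho}$; hence $\xi_{\bfv_k}(\beta,\alpha)$ and $\xi_{\bfs_{\theta,\rho}}(\beta,\alpha)$ agree modulo $\beta^{q_k}$, so $\xi_{\bfv_k}(\beta,\alpha) \to \xi_{\bfs_{\theta,\rho}}(\beta,\alpha)$ as $k\to\infty$ in the $\beta$-adic topology. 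Since each denominator $1 - \beta^{q_k}\alpha^{p_k}$ has constant term $1$, every fraction appearing below is well-defined in $\Q[[\alpha,\beta]]$, and the factor $\beta^{r_{k+1}+q_k}$ present in the $k$-th numerator guarantees formal convergence of the infinite sums in the statement.

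For the first chain of equalities, I apply Lemma \ref{numerateur} with $Y$ empty and $Z = V_k$ to obtain $\xi_{\bfv_k}(\beta,\alpha) = \frac{\beta}{1-\beta}(4)_{k-1}$. I then telescope $(4)_{k-1} = \sum_{j=0}^{k-1}\bigl((4)_j - (4)_{j-1}\bigr)$ with the convention $(4)_{-1}=0$, substitute the explicit expression for $(4)_j - (4)_{j-1}$ supplied by Lemma \ref{recsigmak}, and let $k\to\infty$. This immediately yields the third of the three equivalent forms in the first display. Passing between the three forms is routine: $\gamma_h^{a_{h+1}-b_{h+1}} = \beta^{(a_{h+1}-b_{h+1})q_h}\alpha^{(a_{h+1}-b_{h+1})p_h}$ gives the first-to-second equivalence, and the identities $\sum_{h=0}^k(a_{h+1}-b_{h+1})q_h = r_{k+1}+q_k-1$ and $\sum_{h=0}^k(a_{h+1}-b_{h+1})p_h = \tir_{k+1}+p_k-1$ from Lemma \ref{recrk} give the second-to-third equivalence.

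For the second display, the point is that \eqref{difference} shows $(3)_k$ and $(4)_{k-1}$ differ by the term $(-1)^k \bigl(\tfrac{1}{\beta}-1\bigr)^2 \beta^{r_{k+1}+q_k}\alpha^{\tir_{k+1}+p_k}/(1-\beta^{q_k}\alpha^{p_k})$, which vanishes in the $\beta$-adic topology as $k\to\infty$. Hence $(3)_k$ and $(4)_{k-1}$ share the common limit $\bigl(\tfrac{1}{\beta}-1\bigr)\xi_{\bfs_{\theta,\rho}}(\beta,\alpha)$, and
\[
\xi_{\bfs_{\theta,\rho}}(\beta,\alpha) = \frac{\beta}{1-\beta}\Bigl[(3)_1 + \sum_{k\ge 1}\bigl((3)_{k+1}-(3)_k\bigr)\Bigr].
\]
The sum is handled by the last identity in Lemma \ref{recsigmak}, which contributes precisely the generic term of the second display, since $\frac{\beta}{1-\beta}\cdot \bigl(\frac{1}{\beta}-1\bigr)^2 = \frac{1-\beta}{\beta}$. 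For the boundary term $(3)_1$, I apply \eqref{difference} at $k=1$ together with $(4)_0 = \frac{1-\beta}{\beta}\xi_{\bfv_1}(\beta,\alpha)$; a direct evaluation of $\xi_{V_1^\infty}$ using Lemma \ref{numerateur} and the values $r_1=a_1-b_1$, $q_1=a_1$, $p_1=1$, $q_0=1$, $p_0=0$ yields the first fraction of the display after placing the two resulting terms over the common denominator $(1-\beta^{q_1}\alpha^{p_1})(1-\beta)$, observing that $1-\beta^{q_0}\alpha^{p_0}=1-\beta$.

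The only non-mechanical step is the algebraic simplification of $(3)_1$: one must check that combining $(4)_0$ with the correction term from \eqref{difference} produces exactly the boundary fraction asserted in the statement, which is a short rational-function computation over the common denominator $(1-\beta^{q_1}\alpha^{p_1})(1-\beta)$. Everything else is a routine telescoping argument supported by Lemmas \ref{recrk}, \ref{numerateur}, and \ref{recsigmak}.
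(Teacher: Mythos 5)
Your proof is correct, and the main skeleton — telescoping the sequences $\bigl((4)_k\bigr)$ and $\bigl((3)_k\bigr)$, identifying the limit with $\bigl(\tfrac1\beta-1\bigr)\xi_{\bfs_{\theta,\rho}}(\beta,\alpha)$ via the prefix property of Proposition \ref{combsturm}, and feeding in the difference formulae of Lemma \ref{recsigmak} — is exactly the paper's argument. The one place where you genuinely diverge is the boundary term $(3)_1$: the paper evaluates $\xi_{R_2M_1^{\infty}}(\beta,\alpha)$ head-on by writing out the word $R_2M_1^\infty=0^{a_1-b_1-1}1(0^{a_1-1}1)^{a_2-b_2-1}0(0^{a_1-1}1)^\infty$ and summing geometric series, which forces a separate (if easier) treatment of the degenerate case $a_2=b_2$. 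You instead obtain $(3)_1$ from the already-computed $(4)_0$ and the relation \eqref{difference} at $k=1$; since $q_0=1$, $p_0=0$, $\tir_1=1$, putting the two pieces over the common denominator $(1-\beta^{q_1}\alpha^{p_1})(1-\beta^{q_0}\alpha^{p_0})$ gives the asserted fraction $\bigl(\beta^{r_1+1}\alpha-\beta^{r_2+q_1}\alpha^{\tir_2+p_1}(1-\beta^{q_0}\alpha^{p_0})\bigr)/\bigl((1-\beta^{q_1}\alpha^{p_1})(1-\beta^{q_0}\alpha^{p_0})\bigr)$ uniformly in $a_2-b_2$, with no case split. This is a legitimate use of \eqref{difference} (it is established independently, before the corollary) and is arguably tidier than the paper's computation; the only thing it costs is that the reader must trust the unproved-in-this-paper general-$\alpha$ version of \eqref{difference} also at the initial index, which the paper's direct word expansion of $R_2M_1^\infty$ incidentally reverifies.
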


\begin{proof}

We use the telescopic sums
$$
\eqalign{
\xi_{\bfs_{\theta, \rho}} (\beta,\alpha)& = { (4)_{0}\over {1\over \beta} -1} +\sum_{k\ge 1}  { (4)_k-(4)_{k-1} \over {1\over \beta} -1 } 
\cr
& = {(3)_1  \over {1\over \beta} -1 } 
+\sum_{k\ge 1}  { (3)_{k+1}-(3)_k  \over {1\over \beta} -1 } ,
\cr}
$$
which,  combined with Lemma \ref{recsigmak}, give rise to the terms in the sums with index $k\ge 1$. It remains to compute ${ (4)_{0}/({1\over \beta} -1}) $ and $ (3)_1 /( {1\over \beta} -1 )$.
We have the equalities
$$
\begin{aligned}
 { (4)_{0}\over {1\over \beta} -1} = { \sigma_1\over 1 -\gamma_1} 
 = \frac{\alpha \beta^{a_1 - b_1}}{1 - \alpha \beta^{a_1}} 
& = \alpha (1 - \beta) \frac{\gamma_0^{a_1 - b_1}}{(1 - \gamma_0) (1 - \gamma_1)} 
\\
& ={1-\beta \over \beta} {\beta^{r_1+q_0}\alpha^{\tir_1+p_0} \over (1- \beta^{q_1}\alpha^{p_1})(1- \beta^{q_0}\alpha^{p_0})},
\end{aligned}
$$
which establish the three first expressions for $\xi_{\bfs_{\theta, \rho}} (\beta,\alpha)$.

We now deal with 
$$
{(3)_1  \over {1\over \beta} -1 } = \xi_{R_{2}M_1^{\infty}}(\beta,\alpha).
$$
Assume first that $a_2-b_2 \ge 1$. Then,
$$
R_2 M_1^\infty=  R_1M_1^{a_2-b_2-1}M_0 M_1^\infty = 0^{a_1-b_1-1}1(0^{a_1-1}1)^{a_2-b_2-1}0(0^{a_1-1}1)^\infty.
$$
It follows that
$$
\begin{aligned}
\xi_{R_{2}M_1^{\infty}}(\beta,\alpha)=& \beta^{a_1-b_1}  \alpha\Bigl( 1 + \beta^{a_1}\alpha + \cdots + (\beta^{a_1}\alpha)^{a_2-b_2-1}\Bigr)
\\
& \qquad  \quad + \beta^{a_1-b_1+a_1(a_2-b_2-1)+1}\alpha^{a_2-b_2}{\beta^{a_1}\alpha\over 1- \beta^{a_1}\alpha}
\\
 =& { \beta^{u_1}\alpha^{v_1} -\beta^{u_2}\alpha^{v_2}+\beta^{u_3}\alpha^{v_3}\over 1-\beta^{a_1}\alpha},
\end{aligned}
$$
with
$$
\begin{aligned}
u_1 &= a_1-b_1 = r_1,
 \quad v_1 = 1 ,
 \\
u_2 &= a_1-b_1 +a_1(a_2-b_2) =r_2 +q_1-1,
 \quad v_2 = a_2-b_2 +1 = \tir_2 +p_1 ,
 \\
 u_3 &= a_1a_2+1 +a_1 -b_1 -b_2a_1 =r_2 +q_1,
 \quad v_2 = a_2-b_2 +1 = \tir_2 +p_1 .
\end{aligned}
$$
Thus,
$$
{(3)_1  \over {1\over \beta} -1 }=
{1-\beta \over \beta} { \beta^{r_1+1}\alpha - \beta^{r_2+q_1}\alpha^{\tir_2+p_1}(1-\beta)\over (1-\beta^{a_1}\alpha)(1-\beta)},
$$
as asserted. The fourth expression for $\xi_{\bfs_{\theta, \rho}} (\beta,\alpha)$ is established when $a_2-b_2 \ge 1$. In the case $a_2=b_2$, we have $R_2=0$, $r_1=a_1$. The computations are similar and simpler. 

\end{proof}

As an example, for the characteristic Sturmian word $\bc_\theta$
we have $b_k = 0$ for every $k\ge 1$. Then, it follows from \eqref{sommeqk} and \eqref{sommepk} that
$$
 \sum_{h=0}^k (a_{h+1}-b_{h+1} )q_h= q_{k+1}+ q_k-1  
\quad
 {\rm and}
 \quad
 \sum_{h=0}^k (a_{h+1}-b_{h+1} )p_h= p_{k+1}+ p_k-1.    
$$
Thus, we recover the known formula
$$
\xi_{\bc_\theta}(\beta,\alpha)= ({1\over \beta}-1)\sum_{k\ge 0} (-1)^k {  \beta^{ q_{k+1}+q_k}\alpha^{ p_{k+1}+ p_k}\over (1-\beta^{q_{k+1}}\alpha^{p_{k+1}})(1-\beta^{q_k}\alpha^{p_k})},
$$
which is usually obtained as a consequence of the functional equation for the Hecke-Mahler series. 

Conversely,  a functional chain of equations  of Mahler's type can be deduced from 
our  formula for an arbitrary Sturmian word $\bfs$. 
For $m\ge 0$, put 
$\theta_m = [0, a_{m+1}, a_{m+2}, \dots]$ and denote by $\bfs_m$ 
the Sturmian word with slope $\theta_m$ and formal intercept $b_{m+1}, b_{m+2}, \dots$ (see Definition \ref{formalint}). 
Observe that $\bfs = \bfs_0$. 
With our notation, we have 
$\xi_{\bfs}(\beta,\alpha) =  \xi_{\bfs_0} (\gamma_0, \gamma_{-1})$, 
where $\gamma_{-1} = \beta^{q_{-1}} \alpha^{p_{-1}} = \alpha$. 

\begin{proposition}   \label{xisxism} 
For any $m\ge 1$, we have the relation of Mahler's type
$$
\eqalign
{
\xi_{\bfs}(\beta,\alpha)
= & (1-\beta)\alpha\sum_{k= 0}^{m-1}  (-1)^k{\beta^{ \sum_{h=0}^k (a_{h+1}-b_{h+1} )q_h}\alpha^{ \sum_{h=0}^k(a_{h+1}-b_{h+1})p_h}\over (1-\beta^{q_{k+1}}\alpha^{p_{k+1}})(1-\beta^{q_k}\alpha^{p_k})}  
\cr
& +  (-1)^m{(1-\beta)\alpha\over (1-\beta^{q_m}\alpha^{p_m})\beta^{q_{m-1}}\alpha^{p_{m-1}}}\beta^{ \sum_{h=0}^{m-1} (a_{h+1}-b_{h+1} )q_h}\alpha^{ \sum_{h=0}^{m-1}(a_{h+1}-b_{h+1})p_h}
 \cr
& \hskip 62mm \times
\xi_{\bfs_m}(\beta^{q_m}\alpha^{p_m}, \beta^{q_{m-1}}\alpha^{p_{m-1}})
\qquad
\cr
=(1-\beta)\alpha & \biggl( \sum_{k= 0}^{m-1}  (-1)^k{\prod_{h=0}^k\gamma_h^{a_{h+1}-b_{h+1}}\over (1-\gamma_{k+1})(1-\gamma_k)} 
+ (-1)^m { \prod_{h=0}^{m-1}\gamma_h^{a_{h+1}-b_{h+1}}\over (1-\gamma_m)\gamma_{m-1}} \xi_{\bfs_m}(\gamma_m,\gamma_{m-1})
\biggr)
\cr
& = (1-\beta)\alpha \biggl( {\sigma_m \over 1 - \gamma_m} 
+ (-1)^m { \prod_{h=0}^{m-1}\gamma_h^{a_{h+1}-b_{h+1}}\over (1-\gamma_m)\gamma_{m-1}} \xi_{\bfs_m}(\gamma_m,\gamma_{m-1}) \biggr). 
\cr
}
$$
\end{proposition}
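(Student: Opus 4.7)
The plan is to derive the functional equation directly by splitting the infinite series of Corollary \ref{reprxi} at the index $k = m - 1$. Starting from
$$
\xi_{\bfs}(\beta,\alpha) = (1-\beta)\alpha \sum_{k \ge 0}(-1)^k \frac{\prod_{h=0}^k \gamma_h^{a_{h+1}-b_{h+1}}}{(1-\gamma_{k+1})(1-\gamma_k)},
$$
the head $(1-\beta)\alpha\sum_{k=0}^{m-1}$ is already, term by term, the first sum in the proposition. Identifying it with $\sigma_m/(1-\gamma_m)$ is achieved by telescoping: using Lemma \ref{recsigmak} together with the relation $\sigma_k/(1-\gamma_k) = (4)_{k-1}/(1/\beta - 1)$ and the initial value $\sigma_0 = 0$, the $k$-th term of the head equals $\sigma_{k+1}/(1-\gamma_{k+1}) - \sigma_k/(1-\gamma_k)$, so the sum collapses to $\sigma_m/(1-\gamma_m)$.

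The core of the argument is the identification of the tail $\sum_{k \ge m}$ with the $\xi_{\bfs_m}$-term. After reindexing $k = m + j$ and factoring out $(-1)^m\prod_{h=0}^{m-1}\gamma_h^{a_{h+1}-b_{h+1}}$, the tail becomes
$$
(-1)^m \Bigl( \prod_{h=0}^{m-1}\gamma_h^{a_{h+1}-b_{h+1}} \Bigr) \sum_{j \ge 0}(-1)^j \frac{\prod_{h=0}^j \gamma_{m+h}^{a_{m+h+1}-b_{m+h+1}}}{(1-\gamma_{m+j+1})(1-\gamma_{m+j})}.
$$
I would then recognize the inner sum as the right-hand side of Corollary \ref{reprxi} applied to the shifted Sturmian word $\bfs_m$, evaluated at the point $(\gamma_m, \gamma_{m-1})$. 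By Definition \ref{formalint}, $\bfs_m$ has slope $\theta_m = [0; a_{m+1}, a_{m+2}, \ldots]$ and formal intercept $(b_{m+1}, b_{m+2}, \ldots)$, so its partial quotients are $a_{h+1}^{(m)} = a_{m+h+1}$ and the corresponding formal-intercept digits are $b_{h+1}^{(m)} = b_{m+h+1}$; hence the exponents match and only the ``gammas'' remain to be reconciled.

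The main obstacle, though essentially routine, is the identity
$$
(\gamma_m)^{q_h^{(m)}}(\gamma_{m-1})^{p_h^{(m)}} = \gamma_{m+h}, \qquad h \ge -1,
$$
where $(p_h^{(m)}, q_h^{(m)})$ denote the convergents of $\theta_m$. This reduces to the classical continued-fraction relations $q_{m+h} = q_h^{(m)} q_m + p_h^{(m)} q_{m-1}$ and $p_{m+h} = q_h^{(m)} p_m + p_h^{(m)} p_{m-1}$, which follow by a short induction on $h$ from the common three-term recurrence $x_{h+1} = a_{m+h+1} x_h + x_{h-1}$, with initial values $q_{-1}^{(m)} = 0, q_0^{(m)} = 1, p_{-1}^{(m)} = 1, p_0^{(m)} = 0$. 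Once this is established, Corollary \ref{reprxi} applied to $\bfs_m$ at $(\gamma_m, \gamma_{m-1})$ gives
$$
\xi_{\bfs_m}(\gamma_m, \gamma_{m-1}) = (1 - \gamma_m)\gamma_{m-1} \sum_{j \ge 0}(-1)^j \frac{\prod_{h=0}^j \gamma_{m+h}^{a_{m+h+1}-b_{m+h+1}}}{(1-\gamma_{m+j+1})(1-\gamma_{m+j})},
$$
and substituting this into the tail yields exactly the $\xi_{\bfs_m}$-term of the proposition. The three equivalent forms stated differ only by expanding the $\gamma_h$-products as monomials in $\beta,\alpha$ using Lemma \ref{recrk}, which replaces $\sum_{h=0}^k(a_{h+1}-b_{h+1})q_h$ by $r_{k+1}+q_k$ and $\sum_{h=0}^k(a_{h+1}-b_{h+1})p_h$ by $\tir_{k+1}+p_k$.
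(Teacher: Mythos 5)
Your proposal is correct and follows essentially the same route as the paper's proof: truncate the series of Corollary \ref{reprxi} at index $m$, identify the tail with the $\xi_{\bfs_m}$-term via the convergent relations $q_{m+h}=v_h q_m+u_h q_{m-1}$ and $p_{m+h}=v_h p_m+u_h p_{m-1}$ (where $u_h/v_h$ are the convergents of $\theta_m$), and collapse the head to $\sigma_m/(1-\gamma_m)$ by telescoping with $\sigma_0=0$.
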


\begin{proof}
We truncate the sum giving $\xi_\bfs(\beta,\alpha)$ at the order $m$ and consider the remaining terms
$$
\eqalign{
& (1-\beta)\alpha  \sum_{k\ge m} (-1)^k {  \beta^{ \sum_{h=0}^k (a_{h+1}-b_{h+1} )q_h}\alpha^{ \sum_{h=0}^k(a_{h+1}-b_{h+1})p_h}\over (1-\beta^{q_{k+1}}\alpha^{p_{k+1}})(1-\beta^{q_k}\alpha^{p_k})}
\cr
& =
(-1)^m(1-\beta)\alpha\beta^{ \sum_{h=0}^{m-1} (a_{h+1}-b_{h+1} )q_h}\alpha^{ \sum_{h=0}^{m-1}(a_{h+1}-b_{h+1})p_h}
\cr
& \hskip 5mm \times 
\sum_{k\ge 0}  (-1)^k{\beta^{ \sum_{h=0}^k (a_{m+h+1}-b_{m+h+1} )q_{m+h}}\alpha^{ \sum_{h=0}^k(a_{m+h+1}-b_{m+h+1})p_{m+h}}\over (1-\beta^{q_{m+k+1}}\alpha^{p_{m+k+1}})(1-\beta^{q_{m+k}}\alpha^{p_{m+k}})}. 
}
$$
Now, we claim that the last factor
$$
\sum_{k\ge 0}  (-1)^k{\beta^{ \sum_{h=0}^k (a_{m+h+1}-b_{m+h+1} )q_{m+h}}\alpha^{ \sum_{h=0}^k(a_{m+h+1}-b_{m+h+1})p_{m+h}}\over (1-\beta^{q_{m+k+1}}\alpha^{p_{m+k+1}})(1-\beta^{q_{m+k}}\alpha^{p_{m+k}})}
$$
is equal to 
$$
(1-\beta^{q_m}\alpha^{p_m})^{-1}( \beta^{q_{m-1}}\alpha^{p_{m-1}})^{-1}\xi_{\bfs_m}(\beta^{q_m}\alpha^{p_m}, \beta^{q_{m-1}}\alpha^{p_{m-1}}). 
$$
Indeed, let $(u_n/v_n)_{n\ge 0}$ be the convergents of $\theta_m$. We have 
$$
{u_0\over v_0}= { 0\over1},\quad {u_1\over v_1}= {1\over a_{m+1}}, \quad{u_2\over v_2}= {a_{m+2}\over a_{m+1}a_{m+2}+1}, \dots , 
$$
and we easily check that, for any $h\ge 0$, we have
$$
q_{m+h} = v_h q_m + u_h q_{m-1}  \quad{\rm and} \quad p_{m+h} = v_h p_m + u_h p_{m-1}.
$$
It follows that we can write the exponents in a form  involving the convergents of $\theta_m$ :
$$
\eqalign{
\sum_{h=0}^k (a_{m+h+1} & -b_{m+h+1} )q_{m+h} \cr
& = \Big(\sum_{h=0}^k (a_{m+h+1}-b_{m+h+1} )v_{h}\Big) q_m+ \Big(\sum_{h=0}^k (a_{m+h+1}-b_{m+h+1} )u_{h}\Big )q_{m-1} \cr
}
$$
and
$$
\eqalign{
\sum_{h=0}^k (a_{m+h+1} & -b_{m+h+1} )p_{m+h} \cr
& = \Big(\sum_{h=0}^k (a_{m+h+1}-b_{m+h+1} )v_{h}\Big) p_m+ \Big(\sum_{h=0}^k (a_{m+h+1}-b_{m+h+1} )u_{h}\Big )p_{m-1}. \cr
}
$$
Thus
$$
\displaylines
{
\beta^{ \sum_{h=0}^k (a_{m+h+1}-b_{m+h+1} )q_{m+h}}\alpha^{ \sum_{h=0}^k(a_{m+h+1}-b_{m+h+1})p_{m+h}} =
\cr
(\beta^{q_m}\alpha^{p_m})^{\sum_{h=0}^k (a_{m+h+1}-b_{m+h+1} )v_{h}}\times (\beta^{q_{m-1}}\alpha^{p_{m-1}})^{\sum_{h=0}^k (a_{m+h+1}-b_{m+h+1} )u_{h}},
}
$$
and 
$$
\displaylines{
1-\beta^{q_{m+k+1}}\alpha^{p_{m+k+1}}= 1-(\beta^{q_m}\alpha^{p_m})^{v_{k+1}}(\beta^{q_{m-1}}\alpha^{p_{m-1}})^{u_{k+1}}
\cr
1-\beta^{q_{m+k}}\alpha^{p_{m+k}}= 1-(\beta^{q_m}\alpha^{p_m})^{v_{k}}(\beta^{q_{m-1}}\alpha^{p_{m-1}})^{u_{k}}.
}
$$
The last claim follows from the equalities
$$
(1-\beta)\alpha   \biggl( \sum_{k= 0}^{m-1}  (-1)^k{\prod_{h=0}^k\gamma_h^{a_{h+1}-b_{h+1}}
\over (1-\gamma_{k+1})(1-\gamma_k)} \biggr) 
= {\sigma_m \over 1 - \gamma_m} - {\sigma_0 \over 1 - \gamma_0}
= {\sigma_m \over 1 - \gamma_m}. 
$$
The proof is complete. 
\end{proof}

\section{When the slope has unbounded partial quotients}    \label{unbounded}

The purpose of this Section is to establish Theorem \ref{maintrans} when the slope $\theta$ has unbounded partial quotients. 
In this case, an application of Liouville's inequality is sufficient to conclude. 
We use the logarithmic Weil height $h$ and Liouville's inequality under the form
\begin{equation} \label{liouv}
\log | \zeta | \ge - [\Q(\zeta) : \Q] \, h(\zeta),
\end{equation} 
for any nonzero algebraic number $\zeta$. 
There is some similarity with the proof of \cite[Theorem 6]{Ko96b}. 

We make use of the approximations ${\beta\over 1-\beta}(4)_{k-1}$ and 
${\beta\over 1-\beta}(3)_{k}$ to $\xi = \xi_{\bfs_{\theta, \rho}} (\beta,\alpha)$, considered in Section \ref{functeq}. If $U$ and $V$ are positive quantities depending upon $k$, let us write  $U\asymp V$ to indicate   
that there exist positive constants $c, c'$ such that the inequalities $cU \le V \le c'U$ hold for large $k$. 

\begin{lemma}\label{3k4k}
We have the two estimates
$$
\left\vert \xi - {\beta\over 1-\beta}(4)_{k-1}\right\vert \asymp (\vert\beta \alpha^\theta \vert)^{u_k+q_k} \quad \text{  with} \quad u_k = 
\begin{cases} 
r_{k+1} & \text {if } \quad a_{k+2} -b_{k+2}\ge 1,
\\
r_k +q_{k+1} & \text{if} \quad a_{k+2}=b_{k+2}, 
\end{cases}
$$
and 
$$
  \left\vert \xi -  {\beta\over 1-\beta}(3)_{k}\right\vert \asymp  (\vert\beta \alpha^\theta \vert)^{v_k+q_{k+1}} 
$$
  with
  $$
 v_k = 
\begin{cases} 
r_{k+1} +q_k & \text {if } \quad a_{k+2} -b_{k+2}\ge 2,
\\
r_{k+1} +2q_{k} & \text{if} \quad a_{k+2}-b_{k+2}=1, a_{k+3}-b_{k+3}\ge 1,
\\
r_{k+1} +q_{k} & \text{if} \quad a_{k+2}=1, b_{k+2}=0,  a_{k+3}=b_{k+3},
\\
r_{k} & \text{if} \quad a_{k+2} = b_{k+2}.
\end{cases}
$$
\end{lemma}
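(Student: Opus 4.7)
The plan is to write each difference as a tail sum of an explicit alternating series, using the telescoping identities from Lemma \ref{recsigmak} and the representation from Corollary \ref{reprxi}, and then to identify the leading term carefully, paying attention to possible cancellations. Setting
$$
T_j = \frac{\beta}{1-\beta} \bigl( (4)_j - (4)_{j-1} \bigr) = (-1)^j \frac{1-\beta}{\beta} \cdot \frac{\beta^{r_{j+1}+q_j} \alpha^{\tir_{j+1}+p_j}}{(1-\beta^{q_{j+1}}\alpha^{p_{j+1}})(1-\beta^{q_j}\alpha^{p_j})},
$$
we obtain $\xi - \frac{\beta}{1-\beta}(4)_{k-1} = \sum_{j\ge k} T_j$. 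By Lemma \ref{rktirk}, the $\alpha$-exponent is $\tir_{j+1}+p_j = \theta (r_{j+1}+q_j) + O(1)$, so $|T_j| \asymp (|\beta \alpha^\theta|)^{r_{j+1}+q_j}$, since $|\beta^{q_j}\alpha^{p_j}| \to 0$ keeps the denominators bounded. Similarly, for the second estimate we use the formula of Corollary \ref{reprxi} (second form) to write $\xi - \frac{\beta}{1-\beta}(3)_k$ as a tail sum whose $j$-th term $T'_j$ has a numerator of three monomials in $\beta,\alpha$ with respective exponents $r_{j+1}+q_{j+1}+q_j$, $r_{j+2}+q_{j+1}$ and $r_{j+2}+q_{j+1}+q_j$ (from the expansion of $(1-\beta^{q_j}\alpha^{p_j})$).

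For the first estimate, Lemma \ref{recrk} gives $r_{k+2}+q_{k+1} = r_{k+1}+q_k + (a_{k+2}-b_{k+2}) q_{k+1}$. When $a_{k+2}-b_{k+2} \ge 1$, the term $T_k$ is strictly dominant and its magnitude $\asymp (|\beta\alpha^\theta|)^{r_{k+1}+q_k}$ yields $u_k = r_{k+1}$. When $a_{k+2}=b_{k+2}$, Ostrowski's rules force $b_{k+1}=0$, and one computes directly that $r_{k+1}+q_k = r_{k+2}+q_{k+1}$ and $\tir_{k+1}+p_k = \tir_{k+2}+p_{k+1}$; so the leading monomials of $T_k$ and $T_{k+1}$ coincide up to the difference of the two denominators. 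Factoring,
$$
T_k + T_{k+1} = (-1)^k\frac{1-\beta}{\beta} \, \frac{\beta^{r_{k+1}+q_k} \alpha^{\tir_{k+1}+p_k}}{1-\beta^{q_{k+1}}\alpha^{p_{k+1}}} \cdot \frac{\beta^{q_k}\alpha^{p_k}-\beta^{q_{k+2}}\alpha^{p_{k+2}}}{(1-\beta^{q_k}\alpha^{p_k})(1-\beta^{q_{k+2}}\alpha^{p_{k+2}})},
$$
which has magnitude $\asymp (|\beta\alpha^\theta|)^{r_{k+1}+2q_k} = (|\beta\alpha^\theta|)^{r_k+q_{k+1}+q_k}$, giving $u_k = r_k+q_{k+1}$. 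In this case Ostrowski's rules also prevent $a_{k+3}=b_{k+3}$, so $T_{k+2}$ has magnitude $\ll (|\beta\alpha^\theta|)^{r_{k+2}+q_{k+1}+q_{k+2}}$, which is negligible compared to $T_k+T_{k+1}$, and the geometric decay of the remaining tail follows.

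For the second estimate, we analyse $T'_k$ along the same lines. When $a_{k+2}-b_{k+2} \ge 2$, the first monomial dominates cleanly, yielding magnitude $(|\beta\alpha^\theta|)^{r_{k+1}+q_{k+1}+q_k}$, i.e. $v_k = r_{k+1}+q_k$. When $a_{k+2}-b_{k+2}=1$, Lemma \ref{recrk} gives $r_{k+2} = r_{k+1}+q_k$ and $\tir_{k+2} = \tir_{k+1}+p_k$, so the first two monomials of the numerator of $T'_k$ cancel exactly, leaving the third monomial of exponent $r_{k+2}+q_{k+1}+q_k = r_{k+1}+2q_k+q_{k+1}$; this gives $v_k = r_{k+1}+2q_k$, provided the contribution of $T'_{k+1}$ is no larger, which is ensured as soon as $a_{k+3}-b_{k+3}\ge 1$. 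In the exceptional subcase $a_{k+2}=1, b_{k+2}=0, a_{k+3}=b_{k+3}$, one finds (using again that $a_{k+3}=b_{k+3}$ forces $b_{k+2}=0$, which is consistent here) that $T'_{k+1}$ has a surviving monomial of exponent $r_{k+2}+q_{k+1} = r_{k+1}+q_k$, so $T'_{k+1}$ dominates $T'_k$, yielding $v_k = r_{k+1}$ and magnitude $(|\beta\alpha^\theta|)^{r_{k+1}+q_{k+1}}$ after verifying that no further cancellation occurs in the subsequent tail. Finally, when $a_{k+2}=b_{k+2}$, the second monomial of $T'_k$ has the smallest exponent $r_{k+2}+q_{k+1} = r_{k+1}+q_k = r_k + q_{k+1}$, so $v_k = r_k$ and the magnitude is $(|\beta\alpha^\theta|)^{r_k+q_{k+1}}$, again after checking that $T'_{k+1}$ is smaller (which holds because $a_{k+2}=b_{k+2}$ forces $a_{k+3}-b_{k+3}\ge 1$).

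The routine part is the estimation of geometric tails once the leading order is identified, via Lemma \ref{rktirk}; the main obstacle is keeping track of the cancellations among the three monomials inside each $T'_j$ (and between consecutive $T_j$'s), which requires a careful case distinction driven jointly by $a_{k+2}-b_{k+2}$ and, for the borderline value $1$, by $a_{k+3}-b_{k+3}$ — exactly the four subcases listed in the statement. The Ostrowski constraints ``$b_{j+1}=a_{j+1} \Rightarrow b_j=0$'' are used repeatedly to rule out nested degeneracies that would otherwise spoil the dominance of the claimed leading term.
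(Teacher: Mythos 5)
Your decomposition and case analysis are exactly the paper's: you write both differences as tails $\sum_{h\ge k}\Gamma_h$ and $\sum_{h\ge k}\Delta_h$ of the series in Corollary \ref{reprxi}, use Lemma \ref{rktirk} to convert $\alpha$-exponents into powers of $|\beta\alpha^\theta|$, pair $\Gamma_k+\Gamma_{k+1}$ when $a_{k+2}=b_{k+2}$ (your factored expression is the one in the paper), and track the cancellations among the three monomials of $\Delta_k$ with the same four-way case split driven by $a_{k+2}-b_{k+2}$ and, at the borderline value $1$, by $a_{k+3}-b_{k+3}$.

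There is one arithmetic slip, in the subcase $a_{k+2}=1$, $b_{k+2}=0$, $a_{k+3}=b_{k+3}$. You correctly identify that the surviving monomial of $\Delta_{k+1}$ has exponent $r_{k+3}+q_{k+2}=r_{k+2}+q_{k+1}$, but then assert $r_{k+2}+q_{k+1}=r_{k+1}+q_k$; in fact Lemma \ref{recrk} gives $r_{k+2}=r_{k+1}+q_k$, hence $r_{k+2}+q_{k+1}=r_{k+1}+q_k+q_{k+1}$. The magnitude is therefore $(|\beta\alpha^\theta|)^{r_{k+1}+q_k+q_{k+1}}$, i.e. $v_k=r_{k+1}+q_k$ as in the statement of the lemma, not $v_k=r_{k+1}$ with magnitude $(|\beta\alpha^\theta|)^{r_{k+1}+q_{k+1}}$ as you conclude. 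Once this exponent is corrected, the dominance of $\Delta_{k+1}$ over $\Delta_k$ and over the remaining tail goes through exactly as you describe, and the proof is complete.
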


\begin{proof}
Let us set,  for $k\ge 1$,
$$
\Gamma_k =\frac{1-\beta}{\beta}   (-1)^k{\beta^{r_{k+1} + q_k}\alpha^{ \tir_{k+1} + p_k}\over (1-\beta^{q_{k+1}}\alpha^{p_{k+1}})(1-\beta^{q_k}\alpha^{p_k})}, 
$$
and
$$
 \Delta_k  
=  \frac{1-\beta}{\beta}  (-1)^{k}  { {\beta^{r_{k+1} + q_{k+1} + q_{k}}\alpha^{ \tir_{k+1} + p_{k+1} + p_{k}} 
-  \beta^{r_{k+2} + q_{k+1} }\alpha^{ \tir_{k+2} + p_{k+1}}   
( 1 - \beta^{q_{k}}\alpha^{ p_{k}} ) }
\over (1-\beta^{q_{k+1}}\alpha^{p_{k+1}})( 1-\beta^{q_{k}} \alpha^{p_{k}}) }. 
$$
Recalling Lemma \ref{recsigmak} and the representations of $\xi = \xi_{\bfs_{\theta, \rho}} (\beta,\alpha)$ given in Corollary \ref{reprxi}, 
 we have
$$
 \xi - {\beta\over 1-\beta}(4)_{k-1}= \sum_{h\ge k} \, \Gamma_h ,
\quad {\rm and } \quad 
  \xi - {\beta\over 1-\beta}(3)_{k}=  \sum_{h \ge k} \, \Delta_h.
$$
We now estimate the two above sums. For the sum $ \sum_{h\ge k} \, \Gamma_h$, observe that the two sequences of exponents
$$
r_{h+1}+q_h = 1 + \sum_{j= 0}^h (a_{j+1}-b_{j+1})q_j, \quad h = k, k+1, \dots
$$
and 
$$
 \tir_{h+1}+p_h = 1 + \sum_{j= 0}^h (a_{j+1}-b_{j+1})p_j , \quad  h = k, k+1, \dots
$$
occurring in the quantities 
$\Gamma_h$, are non-decreasing. Moreover, $r_{h+1}+q_h = r_{h+2}+q_{h+1}$ if and only if $a_{h+2}=b_{h+2}$, and 
 $r_{h+2} +q_{h+1}\ge r_{h+1} +q_h + q_{h+1}$ if $a_{h+2}>b_{h+2}$.
 Notice also that we cannot have $r_{h+1}+q_h = r_{h+2}+q_{h+1}=r_{h+3}+q_{h+2}$, since the simultaneous equalities $a_{h+2}=b_{h+2}$ and $a_{h+3}=b_{h+3}$ are forbidden according to  Ostrowski's rules. 
 
In view of Lemma \ref{rktirk}, we have
$$
 |\Gamma_h| \asymp (\vert\beta \alpha^\theta \vert)^{r_{h+1} + q_h}.
$$
In order to estimate $ \sum_{h\ge k} \, \Gamma_h$, we distinguish two cases. Assume first that $a_{k+2}-b_{k+2}\ge 1$. Then 
$$
| \Gamma_k | \asymp (\vert\beta \alpha^\theta \vert)^{r_{k+1}+q_k} \quad { \rm and} \quad 
| \Gamma_{h} | \ll (\vert\beta \alpha^\theta \vert)^{r_{k+1}+q_k+ q_{k+1} }, \quad h \ge k+1.
$$
Taking into account the preceding observations, it follows that 
$$
|  \sum_{h\ge k} \, \Gamma_h | \asymp (\vert\beta \alpha^\theta \vert)^{r_{k+1}+q_k}.
$$
Assume secondly that $a_{k+2}=b_{k+2}$. Then,
$$
\eqalign{
|\Gamma_k + \Gamma_{k+1}| & = \biggl| \frac{1 - \beta}{\beta} \biggr| \, 
\biggl| {\beta^{r_{k+1} + q_k}\alpha^{ \tir_{k+1} + p_k} \over 1-\beta^{q_{k+1}}\alpha^{p_{k+1}} } 
\biggl( \frac{1}{1 -  \beta^{q_{k}}\alpha^{p_{k}}} 
- \frac{1}{1 - \beta^{q_{k+2}}\alpha^{p_{k+2}}} \biggr) \biggr| \cr
& = \biggl| \frac{1 - \beta}{\beta} \cdot 
{\beta^{r_{k+1} + q_k}\alpha^{ \tir_{k+1} + p_k} 
(\beta^{q_{k+2}}\alpha^{p_{k+2}} - \beta^{q_{k}}\alpha^{p_{k}} )
\over (1-\beta^{q_{k+1}}\alpha^{p_{k+1}} ) (1 - \beta^{q_{k}}\alpha^{p_{k}}) (1 - \beta^{q_{k+2}}\alpha^{p_{k+2}}) } \biggr|, \cr
}
$$
so that 
$$
|\Gamma_k + \Gamma_{k+1}|  \asymp  (\vert\beta \alpha^\theta \vert)^{r_{k+1} + 2 q_k} = (\vert\beta \alpha^\theta \vert)^{r_{k} +  q_k+q_{k+1}}.
$$
Now, since $a_{k+3}> b_{k+3}$, we get 
$$
r_{k+3} + q_{k+2} - (r_{k} + q_{k+1}) =r_{k+3} + q_{k+2} - (r_{k+2} + q_{k+1}) \ge q_{k+2},
$$
so that
$$
|\sum_{h\ge k+2}  \Gamma_{h} | \ll (\vert\beta \alpha^\theta \vert)^{r_{k}+q_{k+1}+q_{k+2} }. 
$$
It follows that 
$$
|  \sum_{h\ge k} \, \Gamma_h | \asymp (\vert\beta \alpha^\theta \vert)^{r_{k}+q_k+q_{k+1}}.
$$

We now briefly deal with the sum $\sum_{h\ge k} \Delta_h$. Observe that
$$
| \Delta_h | \asymp 
\begin{cases}
 |\beta \alpha^\theta|^{r_{h+1} + q_{h+1} + q_h} & \text{if} \quad a_{h+2}-b_{h+2}\ge 2,
 \\
 |\beta \alpha^\theta|^{r_{h+1} + q_{h+1} + 2 q_h} & \text{if}\quad  a_{h+2}-b_{h+2}= 1,
 \\
  |\beta \alpha^\theta|^{r_{h} + q_{h+1} } & \text{if} \quad a_{h+2}=b_{h+2}.
  \end{cases}
  $$
Looking at  the absolute value of $\Delta_k$ and $\Delta_{k+1}$ according to the above cases,  we check that 
$$
\vert \sum_{h \ge k}  \Delta_k\vert \asymp \vert \Delta_k \vert, 
$$
unless $a_{k+2} =1, b_{k+2}=0$ and  $a_{k+3}=b_{k+3}$, in which case
$$
\vert \sum_{h \ge k}  \Delta_k\vert \asymp \vert \Delta_{k+1} \vert . 
$$
It follows that 
$$
\vert \sum_{h \ge k}  \Delta_k\vert \asymp | \beta\alpha^\theta |^{v_k+q_{k+1}},
$$
as asserted. Lemma \ref{3k4k} is proved.  
\end{proof}

We are now able to prove Theorem \ref{maintrans} when $\theta$ has unbounded partial quotients. 
Assume on the contrary that $\xi $ is algebraic. We  distinguish two cases.

Assume first that $r_{k+1}/q_k$ takes arbitrarily large values and set
$$
\zeta = \xi - {\beta\over 1-\beta} (4)_{k-1}.
$$
Lemma \ref{3k4k} yields,  for large $k$,  that $\zeta$ is non-zero and that
$$
\log | \zeta | \ll -(u_{k} +q_k)  \ll -r_{k+1}, 
$$
since we have always $u_k \ge r_{k+1}$. But  the algebraic number $\zeta $ has height 
$
h(\zeta) \ll q_k.
$
This contradicts Liouville's inequality \eqref{liouv},   
provided that we have chosen $k$ such that $r_{k+1}/q_k$ is large enough.

Assume now that the sequence $(r_{k+1}/q_k)_{k\ge 1}$ is bounded. Set now 
$$
\zeta = \xi - {\beta\over 1-\beta} (3)_{k}.
$$
Again Lemma \ref{3k4k} implies   that $\zeta$ is non-zero and that
$$
\log | \zeta | \ll -( v_k+q_{k+1} ) \ll -q_{k+1}, 
$$
when $k$ is large enough. But  the algebraic number $\zeta $ has now height 
$$
h(\zeta) \ll r_{k+1}+q_k \ll q_k,
$$
by assumption. We get a final contradiction with Liouville's inequality \ref{liouv}, provided that we have chosen $k$ such that $q_{k+1}/q_k$ is large enough.

\section{Functional transcendence}

A general idea underlying Mahler's method is that the transcendence of a function $f(z)$ over $\Q(z)$ 
is transferred to the transcendence of the value of $f$ at every nonzero algebraic point in the open unit disc. 
Therefore, we need a functional transcendence statement. 

\begin{proposition}   \label{hecke}
Let $\theta, \rho$ be real numbers such that $0 \le \theta, \rho < 1$ and $\theta$ irrational.
Then, the function 
$z \mapsto \xi_{\s_{\theta,\rho}} (z, 1)$ is transcendental over $\C(z)$. Consequently,  the function 
$(z_1, z_2) \mapsto \xi_{\s_{\theta,\rho}} (z_1, z_2)$ is transcendental over $\C (z_1, z_2)$. 
\end{proposition}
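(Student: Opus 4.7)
\medskip

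\noindent\emph{Proof plan.} I would first establish the one-variable case, namely that
$f(z) := \xi_{\s_{\theta,\rho}}(z,1) = \sum_{n \ge 1} s_n z^n$ is transcendental over $\mathbb{C}(z)$, and then deduce the two-variable statement by specializing $z_2 = 1$.

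For the one-variable part, the coefficients $s_n \in \{0,1\}$ form the Sturmian word $\s_{\theta,\rho}$, which is not ultimately periodic because $\theta$ is irrational. In particular, $f$ is not a rational function. The decisive input would be the classical theorem of Szeg\H{o} (\emph{\"Uber Potenzreihen mit endlich vielen verschiedenen Koeffizienten}, 1922): any power series $\sum a_n z^n$ whose coefficients lie in a finite subset of $\mathbb{C}$ is either rational, of the form $P(z)/(1 - z^k)$ with ultimately periodic coefficient sequence, or else admits the unit circle as a natural boundary. If $f$ were algebraic over $\mathbb{C}(z)$, it would continue analytically (as a multivalued function) to $\mathbb{P}^1(\mathbb{C})$ minus a finite set of algebraic branch points; hence the unit circle could not be its natural boundary. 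Szeg\H{o}'s dichotomy would then force $f$ to be rational, contradicting the non-periodicity of $(s_n)$. Thus $f$ is transcendental over $\mathbb{C}(z)$.

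For the two-variable statement, suppose for contradiction that $F(z_1,z_2) := \xi_{\s_{\theta,\rho}}(z_1,z_2)$ satisfies a nontrivial polynomial relation $P(z_1,z_2,F) = 0$ with $P \in \mathbb{C}[X,Y,W]$. Because the coefficient of $z_1^n$ in $F$ is the single monomial $s_n z_2^{\lfloor n\theta+\rho\rfloor}$, the series $F$ lies in the integral domain $R := \mathbb{C}[z_2][[z_1]]$, and the relation holds in $R$. Factoring out the maximal power of $(Y-1)$ dividing $P$ (this factor is not a zero divisor in $R$), we may assume $P(X,1,W) \not\equiv 0$. Applying the evaluation homomorphism $R \to \mathbb{C}[[z_1]]$, $z_2 \mapsto 1$, yields $P(z_1,1,f(z_1)) = 0$. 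Moreover $P(X,1,W)$ must involve $W$ nontrivially, for otherwise a nonzero polynomial in $X$ alone would vanish identically on $\mathbb{C}[[z_1]]$. Thus $f$ would be algebraic over $\mathbb{C}(z_1)$, contradicting the one-variable case.

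\noindent\emph{Main obstacle.} The hard step is the invocation of Szeg\H{o}'s theorem, an analytic result that is not entirely elementary; alternatively one could try to derive transcendence of $f$ directly from the chain of functional equations of Mahler type displayed in Proposition~\ref{xisxism} (specialized at $\alpha = 1$), by comparing the growth of $f$ near the unit circle under the substitutions $z \mapsto z^{q_m}$, but Szeg\H{o}'s dichotomy is by far the cleanest route. The reduction from the two-variable case to the one-variable case via specialization at $z_2 = 1$ is then entirely routine.
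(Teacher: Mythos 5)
Your proof is correct, and the key step is carried out by a genuinely different route than the paper's. Both arguments rest on the same overarching principle — an algebraic function holomorphic in the unit disc can be continued across all but finitely many points of the unit circle, so it cannot have the circle as a natural boundary — but you establish the natural boundary by invoking Szeg\H{o}'s dichotomy for power series with finitely many distinct coefficients, combined with the aperiodicity of Sturmian words, whereas the paper follows Hecke's original analytic argument: it studies $F(z)=\sum_{n\ge 1}\{n\theta+\rho\}z^n$, uses the equidistribution of $(\{n\theta+\rho\})_{n\ge 1}$ and an Abelian summation lemma to show that the radial limits $(1-r)F(r\rme^{2\rmi\pi(q\theta+p)})$ tend to the nonzero value $\rme^{-2\rmi\pi q\rho}/(2\rmi\pi q)$, and concludes that singularities accumulate at the dense set of points $\rme^{2\rmi\pi(q\theta+p)}$. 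Your route is shorter and more robust — it uses only that the coefficients lie in $\{0,1\}$ and are not eventually periodic, so it applies verbatim to any aperiodic word over a finite alphabet — at the cost of importing Szeg\H{o}'s theorem as a black box; the paper's route is self-contained modulo Weyl equidistribution and yields quantitative information about the boundary behaviour. Your reduction of the two-variable statement to the one-variable one (working in $\C[z_2][[z_1]]$, dividing out the maximal power of $z_2-1$ before specializing, and checking that the specialized relation still involves $W$) is more careful than the paper, which dismisses this step with a single ``Consequently''; it is a worthwhile addition.
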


\begin{proof}
Observe that an algebraic  function, say $f(z)$, holomorphic in the open unit disc,    
can be analytically prolongated in a neighborhood of a point $z_0$ on the unit circle,     
if we assume  that $z_0$ is not a root of the discriminant of the minimal polynomial of $f(z)$ over $\C(z)$.

Therefore, it is sufficient to show that 
$z \mapsto \xi_{\s_{\theta,\rho}} (z, 1)$ cannot be prolongated beyond the unit circle. 
The case $\rho = 0$ has been treated by Hecke \cite{He22}. His argument extends easily to 
an arbitrary value of $\rho$. For the sake of completeness, we give the details below. 
Set
$$
F(z) = \sum_{n \ge 1} \, \{n \theta + \rho\} z^n.
$$
Recall that if, for a power series $\sum_{n \ge 1} c_n z^n$, we have
$$
\lim_{t \to + \infty} \, {1 \over t} \, \sum_{n=1}^t c_n = c,
$$
then
$$
\lim_{r \to 1_-} (1 - r) \sum_{n=1}^{+ \infty} c_n r^n = c,
$$
where $r \to 1_-$ means that the real number $r$ tends to $1$ and is less than $1$. 
Let $t$ be a positive integer.  
Write
$$
S(t) = \sum_{n=1}^t   \{n \theta + \rho \} \rme^{2 \rmi \pi n \alpha} 
$$
and take $\alpha = q \theta + p$, for integers $p, q$ with $q$ nonzero. We have
$$
S(t) = \sum_{n=1}^t   \{n \theta + \rho \} \rme^{2 \rmi \pi  n (q \theta + p)}
=  \rme^{- 2 \rmi \pi q \rho} \, \sum_{n=1}^t   \{n \theta + \rho \} \rme^{2 \rmi \pi q (n \theta + \rho)}. 
$$
As $\theta$ is irrational, the sequence $(\{n \theta + \rho \})_{n \ge 1}$ is 
equidistributed in $[0, 1]$, thus 
$$
\lim_{t \to + \infty} \, {1 \over t} \, \sum_{n=1}^t f(\{n \theta + \rho \}) 
= \int_0^1 f(x) {\rm d}x,
$$
for every continuous function $f$. Consequently, 
$$
\lim_{t \to + \infty} \, {1 \over t} \, S(t) = { \rme^{- 2 \rmi \pi q \rho} \over  2 \rmi \pi q }. 
$$
It then follows that 
$$
\lim_{r \to 1_-} (1 - r) \sum_{n=1}^{+ \infty}   \{n \theta + \rho \} (r \rme^{2 \rmi \pi  (q \theta + p)})^n = 
\lim_{r \to 1_-} (1 - r)  F (r \rme^{2 \rmi \pi  (q \theta + p)})  = 
 { \rme^{- 2 \rmi \pi q \rho} \over  2 \rmi \pi q }. 
$$
Since the set of points of the form $q \theta + p$ is dense modulo one, the function $F$ 
cannot be prolongated beyond the unit circle. The same conclusion holds for 
the function $z \mapsto \sum_{n \ge 1} \, \lfloor n \theta + \rho\rfloor z^n$. 
\end{proof}

\section{Transcendence of Hecke--Mahler series at algebraic points}

Loxton and van der Poorten \cite{LovdP77c} (see also \cite[Section 2.9]{Nish96}) 
obtained a general transcendence theorem for chains of functional equations of Mahler's type, from which 
they deduced \cite[Theorem 8]{LovdP77c} the transcendence of $F_{\theta, 0} (\beta, \alpha)$, for every irrational 
number $\theta$ in $(0, 1)$ and every nonzero complex algebraic numbers $\alpha, \beta$ with 
$|\beta \alpha^\theta| < 1$ and $\beta^{q_k} \alpha^{p_k} \not= 1$ for $k \ge 1$, where $p_k / q_k$ is the $k$-th convergent 
to $\theta$. 

We follow the presentation of Nishioka \cite{Nish96}, with some simplification and modernization. 
In her book, the size $\| \alpha \|$ of an algebraic number $\alpha$ is the maximum of the absolute
values of the conjugates of $\alpha$ and of its denominator. The function $\log \| \cdot \|$ is thus 
comparable to the logarithmic Weil height $h$, which we are using. 

For a $2 \times 2$ matrix $\Omega = (\omega_{i,j})$ with nonnegative integer coefficients and a 
point $(z_1, z_2)$ in $\C^2$, we define an application $\Omega : \C^2 \to \C^2$ by
$$
\Omega (z_1, z_2) = (z_1^{\omega_{1,1}} z_2^{\omega_{1,2}}, z_1^{\omega_{2,1}} z_2^{\omega_{2,2}}).
$$

Let $(\Omega_k)_{k \ge 1}$ be a sequence of matrices with nonnegative integer coefficients. 
Let $K$ be a number field and $\alpha_1, \alpha_2$ nonzero elements in $K$. 
Write 
$$
(\alpha_1^{(k)}, \alpha_2^{(k)})  = \Omega_k (\alpha_1, \alpha_2), \quad k \ge 1.
$$  
Let $f_k(z_1, z_2)$, $k \ge 0$, be in $\Z[[z_1, z_2]]$ with bounded coefficients. 
Write 
$$
f_k (z_1, z_2) = \sum_{\lambda_1, \lambda_2 \ge 0} \, \sigma_{\lambda_1, \lambda_2}^{(k)} z_1^{\lambda_1} z_2^{\lambda_2}, 
\quad k \ge 0,
$$
and $\usigma^{(k)} = ( \sigma_{\lambda_1, \lambda_2}^{(k)} )_{\lambda_1, \lambda_2 \ge 0}$. 
For a collection $\us = (s_{\lambda_1, \lambda_2})_{\lambda_1, \lambda_2 \ge 0}$ of variables, set
$$
F(z_1, z_2; \us) = \sum_{\lambda_1, \lambda_2 \ge 0} \, s_{\lambda_1, \lambda_2}  z_1^{\lambda_1} z_2^{\lambda_2}.
$$
Then, we have
$$
F(z_1, z_2; \usigma^{(k)}) = f_k (z_1, z_2), \quad k \ge 0.
$$

Assume that there exist positive real numbers $r_1, r_2, \ldots$  
such that $(r_k)_{k \ge 1}$ tends to infinity and 

\smallskip

(i) Every coefficient of $\Omega_k$ is $\ll  r_k$, for $k \ge 1$. 

(ii) There exist positive real numbers $\eta_1, \eta_2$ which are linearly independent over the rationals and such that 
$$
\log |\alpha_i^{(k)}| \sim - \eta_i r_k, \quad i=1, 2, \quad \hbox{as $k$ tends to infinity}.
$$

(iii) For $k \ge 1$, there exist $a_k, b_k$ in $K$ such that 
$$
f_k (\Omega_k (\alpha_1, \alpha_2)) = a_k f_0 (\alpha_1, \alpha_2) + b_k
$$
and
$$
h(a_k), h(b_k) \ll r_k. 
$$

(iv) If $p$ is a positive integer, $P_0 (z_1, z_2;  \us), \ldots , P_p (z_1, z_2; \us)$ are 
polynomials in $z_1, z_2$ and in the variables $s_{\lambda_1, \lambda_2}$, with coefficients in $K$, and 
$$
E(z_1, z_2; \us) = \sum_{j=0}^p P_j (z_1, z_2; \us) F(z_1, z_2; \us)^j = 
\sum_{\lambda_1, \lambda_2 \ge 0} P_{\lambda_1, \lambda_2} (\us) z_1^{\lambda_1} z_2^{\lambda_2}, 
$$
then there exist nonnegative $\lambda_1, \lambda_2$  with the following property: 
If $k$ is sufficiently large and $P_0 (z_1, z_2;  \usigma^{(k)}), \ldots , P_p (z_1, z_2; \usigma^{(k)})$ are not all zero, 
then 
$P_{\lambda_1, \lambda_2} (\usigma^{(k)})$ is 
non-zero. 

Assumptions (i), (ii), and (iii) correspond exactly to Assumptions (I), (II), and (III) in \cite{Nish96}. 
Our assumption (iv) is a simplified version of Assumption (V) in \cite{Nish96}. Assumption (IV) in \cite{Nish96} is 
clearly satisfied since the coefficients of the series $f_k$ are integers and are bounded.

\begin{theorem}[Loxton--van der Poorten]     \label{LvdPchaines}
Under the above assumption, the complex number $f_0 (\alpha_1, \alpha_2)$ is transcendental. 
\end{theorem}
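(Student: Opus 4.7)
The plan is to adapt Mahler's method to the present chain-of-equations setting, following Loxton and van der Poorten \cite{LovdP77c}. Assume for contradiction that $\zeta := f_0(\alpha_1, \alpha_2)$ is algebraic over $\Q$. For large integer parameters $N$ and $p$ to be chosen at the end, a standard Siegel's lemma argument over $\Z$ produces polynomials $P_0, \ldots, P_p \in \Z[z_1, z_2]$, not all zero, each of degree at most $N$ in each variable and of logarithmic height $O(Np)$, such that the formal power series
$$
E(z_1, z_2) := \sum_{j=0}^p P_j(z_1, z_2) f_0(z_1, z_2)^j
$$
vanishes to total order at least $T \asymp N \sqrt{p}$ at the origin. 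This is feasible because the coefficients of $f_0$ are bounded integers, so the underlying linear system has integer coefficients of polynomial size, and the number $(N+1)^2(p+1)$ of unknowns exceeds the number $\binom{T+2}{2}$ of vanishing conditions.

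Next, set $\w_k := \Omega_k(\alpha_1, \alpha_2)$, and introduce the companion series
$$
E_k(z_1, z_2) := \sum_{j=0}^p P_j(z_1, z_2) f_k(z_1, z_2)^j = E(z_1, z_2; \usigma^{(k)})
$$
in the notation of the theorem. Using the functional identity (iii), evaluation at $\w_k$ gives
$$
E_k(\w_k) = \sum_{j=0}^p P_j(\w_k) (a_k \zeta + b_k)^j,
$$
an element of $K(\zeta)$ of bounded degree and of logarithmic Weil height $O(r_k)$, by virtue of (i) and (iii). The strategy is to combine an \emph{upper bound} on $|E_k(\w_k)|$, of the form
$$
|E_k(\w_k)| \ll \exp\bigl(-(c T - o(T))\, r_k\bigr)
$$
for some explicit $c > 0$ depending on $\eta_1$ and $\eta_2$, with the Liouville \emph{lower bound} $|E_k(\w_k)| \gg \exp(-C r_k)$ from \eqref{liouv} (valid as soon as $E_k(\w_k) \ne 0$); the conclusion will then follow by taking $N$ large enough that $cT > C$.

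Two ingredients remain, both non-trivial. First, the upper bound on $|E_k(\w_k)|$ has to be extracted from the vanishing of $E$ (not $E_k$) at the origin, using the uniform boundedness of the coefficient sequences $\usigma^{(k)}$; this is the technical heart of the Loxton--van der Poorten framework, and amounts to propagating the vanishing along the chain via assumptions (i)--(iii). Second, the non-vanishing $E_k(\w_k) \ne 0$ for infinitely many $k$ is where assumption (iv) enters: by functional transcendence (in the Hecke--Mahler application, Proposition \ref{hecke} guarantees each $f_k$ is transcendental over $\C(z_1, z_2)$), the power series $E_k$ is not identically zero, so (iv) produces a fixed index $(\lambda_1^*, \lambda_2^*)$ such that $P_{\lambda_1^*, \lambda_2^*}(\usigma^{(k)}) \ne 0$ for all sufficiently large $k$; the $\Q$-linear independence of $\eta_1$ and $\eta_2$ in (ii) then ensures that the corresponding monomial $(\alpha_1^{(k)})^{\lambda_1^*}(\alpha_2^{(k)})^{\lambda_2^*}$ dominates in absolute value and cannot be cancelled by other terms. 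The main obstacle is the upper bound, which requires delicate tracking of how the vanishing at the origin interacts with the transition from the coefficient sequence $\usigma^{(0)}$ to $\usigma^{(k)}$.
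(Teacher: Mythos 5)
First, a point of reference: the paper does not prove Theorem \ref{LvdPchaines} at all --- it is quoted from Loxton--van der Poorten \cite{LovdP77c} and Nishioka \cite{Nish96}, with assumptions (i)--(iv) matched to theirs. So your attempt has to be judged against the known proof, and there it has a genuine gap at the step you yourself flag as ``the technical heart''. You construct the auxiliary function by Siegel's lemma so that $E(z_1,z_2)=\sum_{j=0}^p P_j(z_1,z_2)\,f_0(z_1,z_2)^j$ vanishes to order $T$ at the origin, with $P_j\in\Z[z_1,z_2]$ independent of the coefficient variables $\us$ and the vanishing imposed only for the single series $f_0$. But the quantity that must be exponentially small is $E_k(\w_k)=\sum_j P_j(\w_k)f_k(\w_k)^j$, and for the bound $|E_k(\w_k)|\ll\exp(-cT r_k)$ you need the series $E(z_1,z_2;\usigma^{(k)})$ --- built from $f_k$, not $f_0$ --- to vanish to order about $T$ at the origin for all large $k$ with the \emph{same} fixed $P_j$. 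Assumptions (i)--(iii) cannot supply this: they relate the single values $f_k(\w_k)$ to $f_0(\alpha_1,\alpha_2)$ and say nothing about the low-order Taylor coefficients of $f_k$ at the origin, so there is no ``propagation of the vanishing along the chain''. This is precisely why, in the actual proof, the auxiliary polynomials have the form $P_j(z_1,z_2;\us)$ with coefficients that are polynomials in the indeterminates $s_{\lambda_1,\lambda_2}$ (the shape postulated in assumption (iv)), the linear system being solved identically in $\us$ so that the high-order vanishing specializes simultaneously to every $\usigma^{(k)}$, the bounded integrality of the coefficients keeping the specialized data under control. With $\us$-independent $P_j$ and vanishing imposed only at $\usigma^{(0)}$, the upper bound is not just unproved --- the mechanism you propose for it would fail.

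A secondary imprecision concerns the non-vanishing step: the fixed index $(\lambda_1^*,\lambda_2^*)$ furnished by (iv) is not the index of the dominant monomial; it only bounds from above the minimal weight $\lambda_1\eta_1+\lambda_2\eta_2$ over indices with $P_{\lambda_1,\lambda_2}(\usigma^{(k)})\neq 0$, and one must still verify, using (ii) and the uniform boundedness of these coefficients, that the truly dominant term is not cancelled by the tail. Invoking Proposition \ref{hecke} at this point also conflates the general theorem, where (iv) is a hypothesis, with its later application to Hecke--Mahler series. The overall architecture (auxiliary function, upper bound from vanishing at the origin, Liouville lower bound, parameter choice) is the right one, but as written the proof does not go through.
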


Our presentation slightly differs from that of \cite{LovdP77c}, where the 
authors have to cope with admissibility conditions on $\alpha_1$ 
and $\alpha_2$. Here, we have expressed Assumption (iii) with $a_k, b_k$ in $K$, and not 
with functions $a_k (\alpha_1, \alpha_2), b_k (\alpha_1, \alpha_2)$ in $K (\alpha_1, \alpha_2)$, in which case we 
should have excluded the pairs $(\alpha_1, \alpha_2)$ at which these functions are not defined. 
To overcome this difficulty, Nishioka \cite[p. 77]{Nish96} assumes that $\alpha_1$ and $\alpha_2$ are in the open unit disc, but
this is quite restrictive. 

We show how Theorem \ref{LvdPchaines} applies to establish Theorem \ref{maintrans} when 
the slope $\theta$ has bounded partial quotients. 

For $m \ge 0$, recall that $\theta_m = [0, a_{m+1}, a_{m+2}, \dots]$ and that $\bfs_m$ denotes
the Sturmian word with slope $\theta_m$ and formal intercept $b_{m+1}, b_{m+2}, \dots$ (see Definition \ref{formalint}), as in 
Section \ref{functeq}. 
Let us start by the following estimates.

\begin{proposition}  \label{chaine}
Let $\alpha$ and $\beta$ be complex numbers such that $0 < |\beta \alpha^\theta |< 1$ and $\beta \not=1$. 
If  there exists $\ell$ such that $\beta^{q_\ell} \alpha^{p_\ell} = 1$, then put $m_0 = \ell + 1$, otherwise put $\ell = -1$ and $m_0 = 1$. 
For any $m \ge m_0$, there exist $A_m$ and $B_m$ such that 
$$
\xi_{\bfs_m}(\beta^{q_m}\alpha^{p_m}, \beta^{q_{m-1}}\alpha^{p_{m-1}}) = A_m \xi_{\bfs}(\beta,\alpha) + B_m
$$
and
$$
h(A_m), h(B_m) \ll_{\alpha, \beta} q_m.    
$$
\end{proposition}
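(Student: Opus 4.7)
The plan is to invert the Mahler-type identity of Proposition \ref{xisxism} to express $\xi_{\bfs_m}(\gamma_m,\gamma_{m-1})$ as an affine function of $\xi_{\bfs}(\beta,\alpha)$, and then to read off the size of the two coefficients from Lemma \ref{recrk}. Writing, as in Section \ref{functeq}, $\gamma_h = \beta^{q_h}\alpha^{p_h}$ and
$$D_m = (1-\beta)\alpha \, \frac{\sigma_m}{1-\gamma_m}, \qquad C_m = (-1)^m (1-\beta)\alpha\, \frac{\prod_{h=0}^{m-1}\gamma_h^{a_{h+1}-b_{h+1}}}{(1-\gamma_m)\,\gamma_{m-1}},$$
Proposition \ref{xisxism} takes the shape $\xi_{\bfs}(\beta,\alpha) = D_m + C_m\,\xi_{\bfs_m}(\gamma_m,\gamma_{m-1})$, so setting $A_m = 1/C_m$ and $B_m = -D_m/C_m$ gives the required linear relation, provided that $C_m \ne 0$. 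Using the telescoping formulas of Lemma \ref{recrk}, namely $\sum_{h=0}^{m-1}(a_{h+1}-b_{h+1})q_h = r_m + q_{m-1} - 1$ and the analogous identity for the $p_h$, one collapses $C_m$ to the much simpler expression
$$C_m = (-1)^m\,\frac{(1-\beta)\,\beta^{r_m-1}\alpha^{\tir_m}}{1-\gamma_m},$$
which will make the height estimates entirely transparent.

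The next step is to verify the non-vanishing of $C_m$ for every $m \ge m_0$. Since $\alpha,\beta \ne 0$ and $\beta \ne 1$, the only risk lies in the factor $1-\gamma_m$. Suppose, towards a contradiction, that $\gamma_\ell = \gamma_{\ell'} = 1$ for two distinct indices $\ell < \ell'$. Taking moduli yields the linear system $q_\ell \log|\beta| + p_\ell \log|\alpha| = 0$ and $q_{\ell'} \log|\beta| + p_{\ell'} \log|\alpha| = 0$, whose determinant $p_\ell q_{\ell'} - p_{\ell'} q_\ell$ is a nonzero integer because $\theta$ is irrational and distinct convergents to $\theta$ are distinct rationals. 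Hence $|\alpha| = |\beta| = 1$, which contradicts $|\beta\alpha^\theta|<1$. Thus the exceptional index $\ell$ is unique, and $\gamma_m \ne 1$ for every $m \ge m_0$.

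Finally, bounding the heights is routine from the compact form of $C_m$. By standard properties of the logarithmic Weil height, together with the inequalities $r_m \le q_m$ and $\tir_m \le p_m \le q_m$ supplied by Lemma \ref{recrk}, one has
$$h(A_m) \ll r_m\, h(\beta) + \tir_m\, h(\alpha) + h(1-\gamma_m) + O(1) \ll_{\alpha,\beta} q_m.$$
For $B_m = -D_m/C_m$ it suffices to control $h(\sigma_m)$. Since $\sigma_m = \sum_{n=1}^{q_m} s_n \beta^n \alpha^{\sum_{h \le n} s_h}$ is a sum of at most $q_m$ monomials of partial degrees at most $q_m$ in $\beta$ and $p_m$ in $\alpha$, we get $h(\sigma_m) \ll q_m$, whence $h(D_m) \ll q_m$ and $h(B_m) \le h(A_m) + h(D_m) \ll_{\alpha,\beta} q_m$.

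The only non-mechanical step is the uniqueness of the exceptional index $\ell$; once this is disposed of by the short linear-algebra argument sketched above, the proposition is essentially a direct reformulation of Proposition \ref{xisxism} combined with the elementary size bounds of Lemma \ref{recrk}.
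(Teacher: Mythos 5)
Your proof is correct and follows essentially the same route as the paper: invert the identity of Proposition \ref{xisxism} to get $A_m$ and $B_m$, then bound their heights via Lemma \ref{recrk} and the standard height estimate for a short polynomial expression in $\alpha,\beta$ (note that naive subadditivity over the $q_m$ terms of $\sigma_m$ would only give $O(q_m^2)$, so one does need the length--degree form of that estimate, as you implicitly and the paper explicitly use). Your two additions --- the uniqueness of the exceptional index $\ell$, which the paper leaves implicit, and the collapse of the coefficient to $(-1)^m(1-\beta)\beta^{r_m-1}\alpha^{\tir_m}/(1-\gamma_m)$ via the telescoping sums of Lemma \ref{recrk} --- are both correct and make the height bound for $A_m$ immediate, but they do not change the substance of the argument.
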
 

\begin{proof}[Proof of Proposition \ref{chaine}] 
Here, $\alpha$ and $\beta$ denote complex numbers satisfying $0 < |\beta \alpha^\theta| < 1$ and $\beta \not=1$. 
Recall that $\gamma_k = \beta^{q_k} \alpha^{p_k}$, for $k \ge 0$. 
Let $m \ge m_0$ be an integer. Then, $\gamma_m \not=1$ and
$$
A'_m =    {\sigma_m \over 1 - \gamma_m}, \quad
B'_m =  (-1)^m { \prod_{h=0}^{m-1}\gamma_h^{a_{h+1}-b_{h+1}}\over (1-\gamma_m)\gamma_{m-1}}
$$
are well defined. Proposition \ref{xisxism} asserts that 
$$
\xi_{\bfs} (\beta, \alpha) = (1-\beta)\alpha  \bigl(A'_m + B'_m \xi_{\bfs_m} (\gamma_m, \gamma_{m-1}) \bigr). 
$$
It is sufficient to prove that 
$$
h(A'_m), h(B'_m) \ll_{\alpha, \beta} q_m
$$
to establish the proposition.
For $k = 0, \ldots , m-1$, we have
\begin{align*}
h \bigl( \prod_{h=0}^k\gamma_h^{a_{h+1}-b_{h+1}} \bigr) \le \sum_{h=0}^k a_{h+1} h(\gamma_h)
& \le \sum_{h=0}^k a_{h+1} (q_h h(\beta) + p_h h(\alpha)).  \\
& \le (q_{k+1} + q_{k}) h(\beta) + (p_{k+1} + p_{k}) h(\alpha), 
\end{align*}
by \eqref{sommeqk} and \eqref{sommepk}. This implies that $h(B'_m) \ll_{\alpha, \beta} q_m$. 

To estimate the height of
$$
\sigma_m = \sum_{n=1}^{q_{m}} s_n\beta^n\alpha^{\sum_{h=1}^n s_h},
$$
first note that its denominator is bounded from above by $q_m$ times the product of the denominators 
of $\alpha$ and $\beta$. Let $M \ge 2$ be an upper bound for the moduli of the conjugates of $\alpha$ and $\beta$. 
Then, the modulus of any conjugate of $\sigma_m$ is at most equal to $M^2 + \ldots + M^{2 q_m}$, thus 
less than $M^{2 q_m + 1}$. We conclude that $h(A'_m) \ll_{\alpha, \beta} q_m$, as asserted. 
\end{proof}


We are now equipped to complete the proof of Theorem \ref{maintrans}. 

\begin{proof}[Proof of Theorem \ref{maintrans} when $\theta$ has bounded partial quotients.] 
Let $\theta$ and $\rho$ be as in the statement of the theorem. 
Assume that $\theta$ has bounded partial quotients. 
Let $\alpha_1, \alpha_2$ be complex numbers such that $0 < |\alpha_1 \alpha_2^\theta |< 1$ and $\alpha_1 \not= 1$.

Let $M$ be a positive 
integer such that $a_k, b_k \le M$ for $k \ge 1$. 
Recall that $p_k / q_k$ denotes the $k$-th convergent to $\theta$. 
Let $m_0$ be as in Proposition \ref{chaine}. 
By compactness, there exist an increasing sequence $(\nu_k)_{k \ge 1}$ of positive integers, with $\nu_1 \ge m_0$, integers 
$g_1, g_2, \ldots , a'_1, a'_2, \ldots$ in $\{1, \ldots , M\}$, and integers $b'_1, b'_2, \ldots $ in $\{0, \ldots , M\}$ such that 
$$
(a_{\nu_k}, a_{\nu_k - 1}, a_{\nu_k-2}, \ldots ) \to (g_1, g_2, g_3, \ldots), \quad k \to \infty,     
$$
$$
(a_{\nu_k + 1}, a_{\nu_k + 2}, a_{\nu_k + 3}, \ldots  ) \to (a'_1, a'_2, a'_3, \ldots), \quad k \to \infty, 
$$
and 
$$
(b_{\nu_k + 1}, b_{\nu_k + 2}, b_{\nu_k + 3}, \ldots  ) \to (b'_1, b'_2, b'_3, \ldots), \quad k \to \infty. 
$$
As $k$ tends to infinity, the Sturmian word $\bfs_{\nu_k}$ with slope $\theta_{\nu_k}$ and 
intercept $b_{\nu_k + 1}, b_{\nu_k + 2}, \dots$ tends to the Sturmian word $\bfs'$ with slope $[0 ; a'_1, a'_2, \ldots ]$ and
formal intercept $b'_1, b'_2, \ldots$

Set 
$$
\phi := [0; g_1, g_2, \ldots ].
$$
Observe that $\phi$ is irrational and, by the theory of continued fractions, 
$$
\lim_{k \to + \infty} {p_{\nu_k - 1} \over p_{\nu_k}} = \lim_{k \to + \infty} {q_{\nu_k - 1} \over q_{\nu_k}} = \phi. 
$$
Define
$$
\Omega_k = 
\begin{pmatrix}
 q_{\nu_k} & p_{\nu_k} \\ q_{\nu_k - 1} & p_{\nu_k - 1} 
 \end{pmatrix}, \quad 
r_k = q_{\nu_k}, \quad k \ge 1.
$$
Assumption (i) is satisfied.

Since $0 < |\alpha_1 \alpha_2^\theta |< 1$, 
$$
\lim_{k \to + \infty} \, {\log |\alpha_1^{(k)} | \over r_k} = 
{q_{\nu_k} \log |\alpha_1| + p_{\nu_k} \log |\alpha_2| \over r_k} = \log |\alpha_1| + \theta \log |\alpha_2|, 
$$
$$
\lim_{k \to + \infty} \, {\log |\alpha_2^{(k)} | \over r_k} = 
{q_{\nu_k - 1} \log |\alpha_1| + p_{\nu_k - 1} \log |\alpha_2| \over r_k} = \phi (\log |\alpha_1| + \theta \log |\alpha_2| ). 
$$
and $\phi$ is irrational, Assumption (ii) is satisfied. 

Put
$$
f_0 (z_1, z_2) = \xi_{\bfs }(z_1, z_2), \quad f_k (z_1, z_2) = \xi_{\bfs_{\nu_k}} (z_1, z_2), \quad k \ge 1.
$$
The coefficients of $f_k$ are in $\{0, 1\}$ for $k \ge 0$. 
It follows from Proposition \ref{chaine} that Assumption (iii) is satisfied. 

As noted above, we have
$$
\lim_{k \to \infty} f_k (z_1, z_2) = \xi_{\bfs' }(z_1, z_2).
$$
Furthermore, it follows from Proposition \ref{hecke} that $\xi_{\bfs' } (z_1, z_2)$ is a transcendental function over $\C (z_1, z_2)$. 

Let $p$ be a positive integer  
and $P_0 (z_1, z_2;  \us), \ldots , P_p (z_1, z_2; \us)$ be polynomials as in (iv). Let $E(z_1, z_2; \us)$ be as above.  
Setting
$$
\xi_{\bfs' } (z_1, z_2) = \sum_{\lambda_1, \lambda_2 \ge 0} \, \sigma_{\lambda_1, \lambda_2}  
z_1^{\lambda_1} z_2^{\lambda_2},
$$
we have
$$
\lim_{k \to + \infty} \, P_j(z_1, z_2; \usigma^{(k)}) = P_j (z_1, z_2; \usigma), \quad
\lim_{k \to + \infty} P_{\lambda_1, \lambda_2} (\usigma^{(k)}) = P_{\lambda_1, \lambda_2} (\usigma).
$$
If $P_j (z_1, z_2; \usigma)$, $0 \le j \le p$, are all zero, then the polynomials $P_j (z_1,z_2;\usigma^{(k)})$ vanish  identically   
for $k$ suficiently large, thus Assumption (iv) is clearly satisfied.   
Otherwise, 
$E(z_1, z_2 ; \usigma)$ is not zero, since $\xi_{\bfs' }$ is a transcendental function. 
Consequently, there exist nonnegative $\lambda_1, \lambda_2$ such that      
$P_{\lambda_1, \lambda_2} (\usigma)$ is nonzero. 
Hence, $P_{\lambda_1, \lambda_2} (\usigma^{(k)})$ is not zero for all $k$ 
sufficiently large and Assumption (iv) is satisfied. 
All this shows that Theorem \ref{LvdPchaines} applies and yields Theorem \ref{maintrans}   
when the slope $\theta$ has bounded partial quotients.  
\end{proof}

\end{document}